\documentclass[preprint]{amsart}

\usepackage[english]{babel}
\usepackage[utf8]{inputenc}
\usepackage{lmodern}        
\usepackage{amsfonts,amssymb}
\usepackage{graphicx,tikz}
\usepackage{tikz-cd} 
\usepackage{float}
\usepackage{amsmath, amsthm, amsfonts}
\usepackage{hyperref}
\usepackage{enumitem}  
\usepackage[all,cmtip]{xy}
\usepackage[capitalise]{cleveref}
\usepackage{comment}
\usepackage{enumitem}
\usepackage{nicefrac}
\usepackage[official]{eurosym}
\usepackage{nomencl}
\usepackage{amsfonts}
\usepackage{hyperref}
\usepackage{tabto}
\usepackage{amssymb}
\usepackage{fancyhdr}
\usepackage{amscd}
\usepackage{xcolor}
\usepackage[english]{babel}
\usepackage{ulem}

\usepackage{tikz}
\usepackage{amssymb}
\usepackage{amsthm}
\usepackage{amsmath}
\usepackage{latexsym}
\usepackage{amsfonts}
\usepackage{color}
\usepackage{stmaryrd}
\usepackage{epic,latexsym,bezier,caption,amsbsy,psfrag,color,enumerate,amsfonts,amsmath,amscd,amssymb}
\usepackage{epsfig}
\usepackage{rotating}
\usepackage{graphics}
\usetikzlibrary{calc}
\usepackage{csquotes}       
\usepackage{amsthm}         
\usepackage{mathtools}      
\usepackage{amsfonts}       
\usepackage{amssymb}        

\usepackage{enumitem}
\usepackage{subcaption}     
\usepackage{subdepth}       
\usepackage{hyperref}       
\usepackage{tikz}           

\usetikzlibrary{automata, positioning, arrows.meta, calc, math}

\tikzstyle{graph} = [node distance = 1.5cm, every path/.style = {semithick}]
\tikzset{graph vertex/.style = {inner sep = 1pt, circle, fill}}
\tikzstyle{graph vertices} = [every node/.style = graph vertex]
\tikzset{loop up/.style = {out = 125, in = 55, looseness = 20, above}}
\tikzset{loop left/.style = {out = 145, in = 215, looseness = 20, left}}
\tikzset{loop down/.style = {out = -55, in = -125, looseness = 20, below}}
\tikzset{loop right/.style = {out = 35, in = -35, looseness = 20, right}}
\tikzset{oloop up/.style = {out = 125, in = 55, looseness = 13, above}}
\tikzset{oloop left/.style = {out = 145, in = 215, looseness = 13, left}}
\tikzset{oloop down/.style = {out = -55, in = -125, looseness = 13, below}}
\tikzset{oloop right/.style = {out = 35, in = -35, looseness = 13, right}}

\tikzstyle{automata} = [->, > = {Stealth}, node distance = 2.5cm, 
	every state/.style = {thick}, every path/.style = {semithick}]
\tikzstyle{small font} = [every node/.style = {font = \small}]
\tikzset{automata loop up/.style = {out = 105, in = 75, looseness = 7, min distance = 8mm, above}}
\tikzset{automata loop left/.style = {out = 165, in = 195, looseness = 7, min distance = 8mm, left}}
\tikzset{automata loop down/.style = {out = -75, in = -105, looseness = 7, min distance = 8mm, below}}
\tikzset{automata loop right/.style = {out = 15, in = -15, looseness = 7, min distance = 8mm, right}}

\newtheorem{thm}{Theorem}[section]
\newtheorem{prop}[thm]{Proposition}
\newtheorem{lem}[thm]{Lemma}
\newtheorem{cor}[thm]{Corollary}

\theoremstyle{definition}
\newtheorem{define}[thm]{Definition}

\theoremstyle{remark}
\newtheorem{rem}[thm]{Remark}

\numberwithin{equation}{section}    

\DeclarePairedDelimiter\br{(}{)}
\DeclarePairedDelimiter\cbr{\{}{\}}

\DeclarePairedDelimiter\abs{|}{|}

\DeclareMathOperator{\Sz}{Sz}

\makeatletter
\newcommand{\defeq}{\mathrel{\rlap{%
    \raisebox{0.3ex}{$\m@th\cdot$}}%
    \raisebox{-0.3ex}{$\m@th\cdot$}}%
=}
\makeatother

\newcommand{\N}{\mathbb{N}}

\begin{document}

\title{Topological indices on self-similar graphs generated by groups}

\author{Daniele D'Angeli}
\address{Dipartimento di Ingegneria. Universit\`{a} degli Studi Niccol\`{o} Cusano - Via Don Carlo Gnocchi, 3 00166 Roma, Italia}
\email{daniele.dangeli@unicusano.it}

\author{Stefan Hammer}
\address{TU Graz Institut f\"{u}r Diskrete Mathematik, 8010 Graz, Steyrergasse 30/III, Austria}
\email{hammer@math.tugraz.at}

\author{Emanuele Rodaro}
\address{Politecnico di Milano - P.zza Leonardo da Vinci, Milano, Italia}
\email{emanuele.rodaro@polimi.it}

\begin{abstract}
In this paper, we determine precise formulas for the diameters, the number of perfect matchings, and the Tutte polynomials for an infinite family of finite graphs, namely the Schreier graphs of tree automaton groups, also called tree graph automata. This enables us to easily find the number of spanning trees, spanning forests, and an explicit form for the chromatic polynomials. In the second part of the paper, we provide the precise values for the Wiener and Szeged index of any tree graph automaton.
\end{abstract}

\keywords{ Wiener index, Szeged index, Tutte polynomial, Tree automaton groups, Schreier graphs.}

\maketitle
\begin{center}
{\footnotesize{\bf Mathematics Subject Classification (2020)}: 05C09, 05C12, 05C30, 20E08.}
\end{center}
\section{Introduction}
The interaction between Algebra and Combinatorics is widely studied and has given rise to numerous examples of structures that have been important for the development of new research ideas. One of the most fruitful areas is the representation of a group action as a geometric object, typically a graph. Understanding which algebraic or structural properties are reflected in combinatorial and computational properties is one of the most fascinating problems studied in this field.

This paper is devoted to the study of various topological and combinatorial indices within an infinite class of self-similar graphs, specifically the Schreier graphs of tree automaton groups. Such graphs will be usually called \textit{tree graph automata} in the sequel. These graphs (actually the corresponding groups) were first introduced in \cite{articolo0} and subsequently investigated in \cite{articolo1} and \cite{articolo2}. They emerge from the action of an automaton on a rooted regular tree by automorphisms and belong to the class of Schreier graphs of automaton groups.

The study of automaton groups gained significant momentum with the Grigorchuk group, which provided the first example of a group exhibiting intermediate growth – neither polynomial nor exponential – thus addressing a significant question posed by Milnor. This class encompasses notable examples of amenable groups, Burnside groups, and groups with finite $L$-presentation. It also connects with the theory of profinite groups, combinatorics through Schreier graphs, and complex dynamics via the notion of iterated monodromy groups. Further details can be found in \cite{Handbook, nekra} and references therein.

Schreier graphs provide a representation of the action of an automaton group on the levels of a corresponding tree. They form a sequence of finite graphs that converge in the space of rooted graphs to infinite rooted graphs depicting the action on the tree's boundary. This convergence can be formalised using the notion of Gromov-Hausdorff convergence \cite{marked}. 

The limit Schreier graphs of certain tree automaton groups have been exhaustively classified in \cite{articolo1}, where it is shown that there are uncountably many isomorphism classes of infinite Schreier graphs. 

It is remarkable that tree graph automata are cactus graphs. This particular structure enables us to provide exact formulas with our computations (and not just estimations). In particular, in this paper, we derive expressions for the diameter (Proposition \ref{diametro}) and the number of perfect matchings (Theorem \ref{perfetto}) for any tree graph automata. 

The number of perfect matchings, which is linked to the dimer model and domino tilings, holds significance in statistical mechanics (see \cite{dimeri,kast, RK} and references therein). It is also worth mentioning \cite{pak} where the authors explicitly calculate the diameter of the Schreier graphs for the Aleshin and Bellaterra automaton groups.

Furthermore, we examine the Tutte polynomial of these graphs, a graph polynomial in two variables that encapsulates critical structural and combinatorial information. This polynomial's expression enables the extraction of various combinatorial indices through the specialization of its variables (refer to \cite{tutte, tutte 1} for more details and \cite{donno, donno2} for computations similar to ours). The easy factorization of the Tutte polynomial is a consequence of the inherent structure of cactus graphs (Theorem \ref{tutto}). This enables us to find a formula for the number of spanning trees, spanning forests, and the chromatic polynomial for our graphs (Corollary \ref{corollario}).

Finally, at the core of our study, we focus on the Wiener index and related indices for these graphs. Due to the difficulty of finding explicit closed formulas for infinite classes of graphs, people are also interested in extremal problems or approximations of the Wiener index (see, for instance, \cite{knor}). We surprisingly provide explicit formulas for both the Wiener index and the Szeged index of every tree graph automata.

The Wiener index was invented in 1947 by the chemist Wiener \cite{wiener}, and is used to correlate physicochemical properties with the structure of chemical compounds. It is defined as the sum of the shortest path lengths between all pairs of vertices, but can also be viewed as half the sum of the total distances of each vertex from all others (for more details, see \cite{azari, moreon, wagner}). Similarly, the Szeged index was introduced in \cite{miao} as an extension of a formula for the Wiener index of trees. The relation between such indices has been investigated by many authors (see, for instance, \cite{sonno, sandi} and references therein). In the context of cactus graphs with cycles of even length, the Wiener index can be directly derived from the Szeged index by multiplication with a factor of 2 as shown by Hammer in \cite{stefano}. 

We exploit the geometry of our graphs to give an explicit formula for the Szeged index and so derive an exact expression for the Wiener index for any tree graph automata. It is interesting that it only depends on the Wiener index of the initial tree $G$ as established in Theorem \ref{thm:TreeGraphAutomataWI}. 

We also emphasize the interplay between the Wiener index and the algebraic aspects of the underlying automaton group. Specifically, the computation of an explicit formula for the Wiener index provides a direct formula for the average distance between vertices in the associated Schreier graphs. 

The average distance in the Schreier graph reflects the typical length of geodesics connecting pairs of vertices, which has a natural algebraic interpretation. In the automaton group setting, the distance between two vertices \( p_1 \) and \( p_2 \) corresponds to the length of the shortest element \( g \) in the group that conjugates the stabilizers \( G_{p_1} \) and \( G_{p_2} \), i.e., \( g G_{p_1} g^{-1} = G_{p_2} \). This perspective ties the graph-theoretic measure of distance to the intrinsic group structure, particularly the action of the group on the set of vertices and the relationships between their stabilizers.


\section{Graph automaton groups and Schreier graphs}

In this section, we recall some basic facts about automaton groups and their Schreier graphs, focusing on graph automaton groups and the correponding tree graph automata.
\begin{define}
An \textit{automaton} or \textit{transducer} is a quadruple $\mathcal{A} =
(Q,X,\lambda,\eta)$, where:

\begin{itemize}
\item $Q$ is the set of states;
\item $X = \{1 ,2,\ldots, h\}$ is an alphabet;
\item $\lambda: Q \times X \rightarrow Q$ is the restriction map;
\item $\eta: Q \times X \rightarrow X$ is the output map.
\end{itemize}
\end{define}
The automaton $\mathcal{A}$ is \textit{finite} if $Q$ is finite and it is \textit{invertible} if, for all $q\in
Q$, the transformation $\eta(q, \cdot):X\rightarrow X$ is a permutation of $X$. An automaton $\mathcal{A}$ can be visually
represented by its \textit{Moore diagram}: this is a directed labeled graph whose vertices are identified with the states of
$\mathcal{A}$. For every state $q\in Q$ and every letter $x\in X$, the diagram has an arrow from $q$ to $\lambda(q,x)$
labeled by $x|\eta(q,x)$. A sink $id$ in $\mathcal{A}$ is a state with the property that $\lambda(id,x)=id$ and $\eta(id,x)=x$ for any $x\in X$.
For each $n\geq 1$, let $X^n$ denote the set of words of length $n$ over the alphabet $X$ and put $X^0  = \{\emptyset\}$, where $\emptyset$ is the empty word. Then the action of $\mathcal{A}$ can be naturally extended to the infinite set $X^\ast= \bigcup_{n=0}^\infty X^n$ and to the set $X^\infty = \{x_1x_2x_3\ldots \mid x_i\in X\}$ of infinite words over $X$.

For a state $q\in Q$, the transformation $\eta(q,\cdot)$ of $X^{\ast}\cup X^\infty$ is a bijection if $\mathcal{A}$ is invertible.
Given the invertible automaton $\mathcal{A}$, the \textit{automaton group} generated by $\mathcal{A}$ is by definition the group generated by the transformations $\eta(q,\cdot)$, for $q\in Q$, and it is denoted $G(\mathcal{A})$. 

The $n$-th Schreier graph $\Gamma_n=(V_{\Gamma_n}, E_{\Gamma_n})$ of the action of $G(\mathcal{A})$ on $X^n$, where $V_{\Gamma_n}$ is identified with $X^n$ and two vertices $u$ and $v$ are adjacent if there exists $q \in Q$ such that $\eta(q,u) = v$. In this case, the edge joining $u$ and $v$ is labelled by $q$.

Notice that the Schreier graph $\Gamma_n$ is a regular graph of degree $2\, \abs{Q}$ on $h^n$ vertices and it
is connected for each $n$ under the hypothesis of spherical transitivity of the action. 

In \cite{articolo0}, we introduced the following construction associating an invertible automaton with a given finite graph.

\begin{rem}
Before going on, we stress the fact that we start from an arbitrary graph and with it we are able to construct an invertible automaton that gives rise to infinitely many Schreier graphs. The graph chosen at beginning to construct the automaton and the corresponding Schreier graphs are for sure related, but in principle they are different objects.
\end{rem}

Let $G=(V,E)$ be a finite graph, where $V=\{x_1,\ldots, x_h\}$ is its vertex set and $E$ is its edge set. Now let $E'$ be the set of edges, where an orientation of each edge has been chosen. Notice that elements in $E$ are unordered pairs of type $\{x_i,x_j\}$, whereas elements in $E'$ are ordered pairs of type $(x_i,x_j)$, meaning that the edge has been oriented from the vertex $x_i$ to the vertex $x_j$.\\
\indent We then define an automaton $\mathcal{A}_G=(E' \cup \{id\}, V, \lambda, \eta)$ such that:
\begin{itemize}
\item $E' \cup \{id\}$ is the set of states;
\item $V$ is the alphabet;
\item $\lambda: E' \times V\to E'$ is the restriction map such that, for each $e=(x,y)\in E'$, one has
$$
\lambda (e,z) = \left\{
                  \begin{array}{ll}
                    e & \hbox{if } z = x, \\
                    id & \hbox{if } z \neq x;
                  \end{array}
                \right.
$$
\item $\eta: E' \times V\to V$ is the output map such that, for each $e=(x,y)\in E'$, one has
$$
\eta (e,z) = \left\{
                  \begin{array}{ll}
                    y & \hbox{if } z = x, \\
                    x & \hbox{if } z = y, \\
                    z & \hbox{if } z \neq x, y.
                  \end{array}
                \right.
$$
\end{itemize}
In other words, any directed edge $e=(x,y)$ is a state of the automaton $\mathcal{A}_G$ and it has just one restriction to itself (given by $\lambda(e,x)$) and all other restrictions to the sink $id$. Its action is nontrivial only on the letters $x$ and $y$, which are switched since $\eta(e,x)=y$ and $\eta(e,y)=x$. It is easy to check that
$\mathcal{A}_G$ is invertible for any graph $G$ and any choice of the orientation of the edges. The \textit{graph automaton group} $\mathcal{G}_G$ is defined as the automaton group generated by $\mathcal{A}_G$.
\begin{figure}[ht]
\begin{center}
\psfrag{a}{$a$}\psfrag{b}{$b$}\psfrag{1}{$1$}\psfrag{2}{$2$}\psfrag{3}{$3$}\psfrag{id}{$id$}\psfrag{1|2}{$1|2$}\psfrag{2|3}{$2|3$}\psfrag{2|1,3|3}{$2|1, 3|3$}
\psfrag{1|1,3|2}{$1|1, 3|2$}\psfrag{1|1,2|2,3|3}{$1|1, 2|2, 3|3$}
\includegraphics[width=0.7\textwidth]{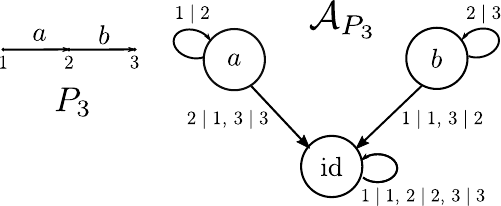}
\end{center}\caption{The path $P_3$ and the associated automaton $\mathcal{A}_{P_3}$.}
\label{automatonP3}
\end{figure}

For example, the group $G_{P_3}$ corresponds to the so-called \textit{Tangled odometer} (see Figure 1, and, e.g., \cite{articolo1}). Its two generators have the self-similar representation:
$$
a=(a,id,id)(1,2) \qquad b=(id,b,id)(2,3).
$$

\begin{figure}
    \centering
   \includegraphics[width=0.5\textwidth]{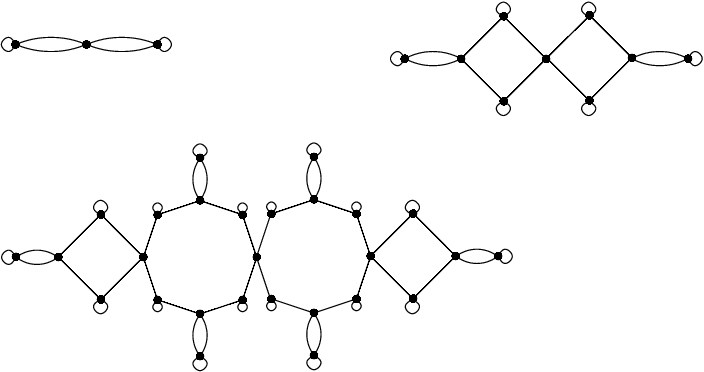}
    \caption{The Schreier graphs $\Gamma_1,\Gamma_2$ and $\Gamma_3$ of the tangled odometer.}
    \label{fig:fig_1}
\end{figure}

\begin{figure}
    \centering
   \includegraphics[width=0.5\textwidth]{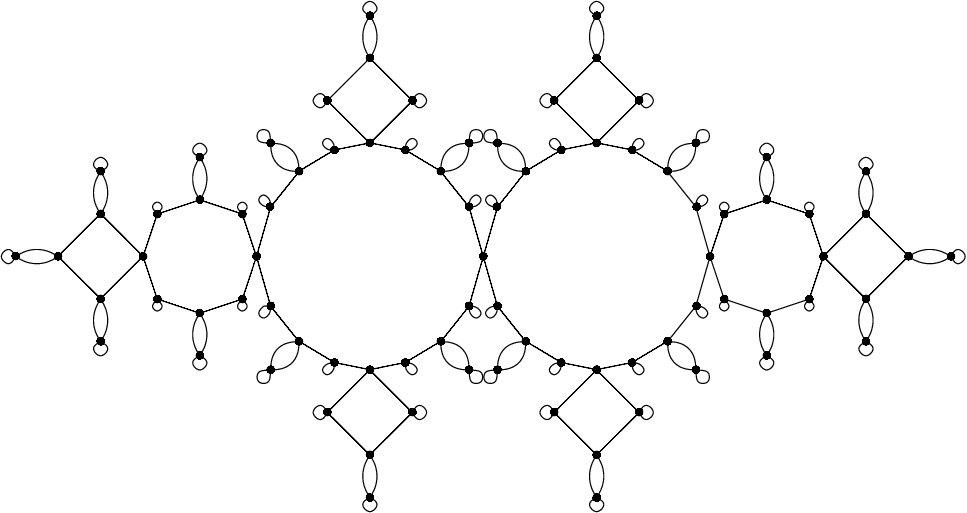}
    \caption{The Schreier graph $\Gamma_4$ of the tangled odometer.}
    \label{fig:fig_2}
\end{figure}

In this paper, we will deal with \textit{tree automaton Schreier graphs} $\Gamma^G_n$, which are the Schreier graphs of automaton groups obtained from a tree $G$ according to the construction described above. 

In what follows, given a tree $G$, we denote by $\mathcal{A}_G$ the automaton generating the associated group and by $\Gamma_n^G$ its $n-$th Schreier graph.

We will show that tree graph automata given by a tree are bipartite cactus graphs where every block is a cycle. This property is crucial to derive our main results from Theorem~\ref{thm:SzToWI}. 

Recall that a cactus is a connected graph where every two distinct cycles have at most one common
vertex. Alternatively, the graph consists of a single vertex, or every block is either an
edge or a cycle.

Let $G$ be a graph, $e$ an edge in $G$, and $C$ a cycle in $\Gamma^G_n$. If all edges of $C$ are labelled by $e$, we call $C$ an \textit{$e$-cycle}. Suppose that $G$ is a tree, then every cycle in $\Gamma^G_n$ is an $e$-cycle for some edge $e$ in $G$. 

Below, we describe many interesting and important properties of $e$-cycles. In order to make it easier to see and understand these facts, we refer to an exemplary drawing of an $e$-cycle, which can be found in Figure~\ref{fig:eCycle}. 

Let $e = (s, t)$ with vertices $s$, $t$ in $G$, and $C$ be an $e$-cycle. Then there is an $i$, $0 \leq i \leq n$, such that all vertices in $C$ coincide on the last $n - i$ positions. Furthermore, the first $i$ positions of vertices in $C$ contain only the letters $s$ and $t$, and if there is a letter in position $i + 1$, it is neither an $s$ nor a $t$. The length of $C$ is $2^i$, so the prefixes of length $i$ of the vertices are in a one-to-one correspondence with the words of length $i$ over the alphabet $\cbr{s, t}$. Every vertex has a vertex directly opposite on the cycle that only differs in position $i$. 
Thus, Schreier graphs of tree automaton groups are clearly bipartite cactus graphs where every block is a cycle. 

\begin{figure} [ht]
    \centering
    \begin{tikzpicture} [graph]
        \begin{scope} [graph vertices]
            \foreach \i in {0, ..., 15}
            {
                \node at (11.25 + 22.5 * \i : 3 cm) (\i) {};
            }
        \end{scope}
        
        \begin{scope}
            \node at (0) [right] {22113};
            \node at (1) [right] {12113};
            \node at (2) [above right] {21113};
            \node at (3) [above right] {11113};
            \node at (4) [above left] {22223};
            \node at (5) [above left] {12223};
            \node at (6) [left] {21223};
            \node at (7) [left] {11223};
            \node at (8) [left] {22123};
            \node at (9) [left] {12123};
            \node at (10) [below left] {21123};
            \node at (11) [below left] {11123};
            \node at (12) [below right] {22213};
            \node at (13) [below right] {12213};
            \node at (14) [right] {21213};
            \node at (15) [right] {11213};
            \node at (90 : 3.3 cm) {$e_C$};
            \node at (270 : 3.3 cm) {$e_C'$};
        \end{scope}
        
        \begin{scope}[every node/.style = {font = \footnotesize}]
            \foreach [evaluate = {\j = int(mod(\i + 1, 16}] \i in {0, ..., 15}
            {
                \draw (\i) edge (\j);
                \node at (\i * 22.5 : 2.7 cm) {$e$};
            }
        \end{scope}
    \end{tikzpicture}
    \caption{An $e$-cycle $C$ for $e = (1, 2)$ of size $2^4 = 16$ in the 5-th Schreier graph of the graph automaton given by a path on the three vertices 1, 2, 3, where 2 is the central vertex.}
    \label{fig:eCycle}
\end{figure}
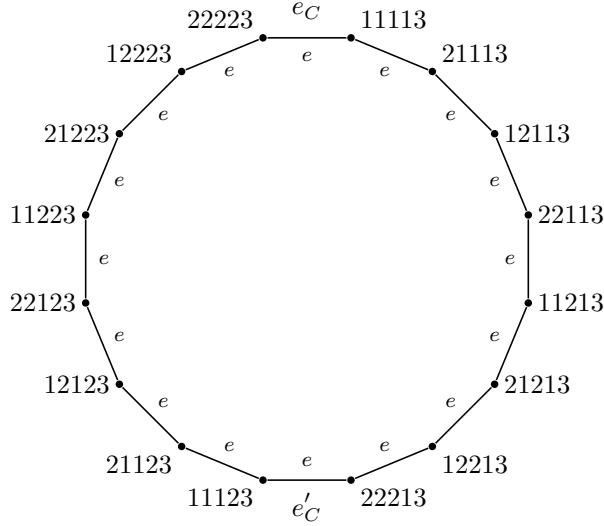

\section{Diameters and perfect matchings}

The diameter $diam(G)$ of a finite graph $G$ is defined as the maximum of the distances between any two vertices of the graph $G$.

We start by showing that the diameter of the $n$-Schreier graph of an automaton group obtained from a tree is directly given by $n$ and the diameter of the tree.

\begin{prop}\label{diametro}
 Let $d_G$ be the diameter of the tree $G$. Then, for every $n \geq 1$, 
 $$
 diam(\Gamma^G_n) = 2^{n+1} + d_G(2n-1)-4n. 
 $$
\end{prop}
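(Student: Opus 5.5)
The plan is to argue by induction on $n$, using the self-similar decomposition of $\Gamma^G_{n+1}$ into $|V|$ copies of $\Gamma^G_n$. The base case is immediate: for $n=1$ the graph $\Gamma^G_1$ is $G$ (with each edge doubled), so its diameter is $d_G = 2^2 + d_G - 4$.

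\textbf{Structure of $\Gamma^G_{n+1}$.} For $x\in V$ let $\Gamma^{(x)}$ be the subgraph on the words $wx$ with $w\in X^n$. I will write $z^k$ for the constant word $zz\cdots z$ of length $k$.

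From the definition of $\eta$ and $\lambda$, an edge $e=(s,t)$ acts on $wx$ exactly as on $w$, except at the vertices $s^nx$ and $t^nx$, where it sends them out of the copy $\Gamma^{(x)}$.

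Hence $\Gamma^{(x)}\cong\Gamma^G_n$, except that for each edge $e=(s,t)$ of $G$ the long $e$-cycles of length $2^n$ through $s^n s$ (in $\Gamma^{(s)}$) and through $t^n t$ (in $\Gamma^{(t)}$) are cut open and glued. The result is a single $e$-cycle of length $2^{n+1}$ joining $\Gamma^{(s)}$ and $\Gamma^{(t)}$.

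So the copies are arranged along the tree $G$, and each tree edge is realized by one big cycle of length $2^{n+1}$. In that cycle the two "entry" vertices lie opposite each other, in accordance with the description of $e$-cycles in the paper. Since $\Gamma^G_n$ is a cactus, every path between different copies must cross these big cycles.

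\textbf{Auxiliary quantities.} For a vertex $x$ of $G$ I define
\[
\rho_n(x) = \max_{v\in\Gamma^G_n} d(x^n, v),
\]
the eccentricity of the special vertex $x^n$. I also need the distances $d(x^n,y^n)$ between special vertices.

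Using the decomposition, I will prove the recursions. A geodesic from $x^n$ to $y^n$ at tree distance $k$ crosses $k$ big cycles, each contributing half of its length, and moves inside intermediate copies between special vertices. This should give
\[
d(x^n,y^n)=k(2^n-1),
\]
and the analogous recursion for $\rho_n(x)$, in terms of the eccentricity $\varepsilon_G(x)$ of $x$ in $G$.

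For the diameter, I will take two vertices $u\in\Gamma^{(a)}$ and $v\in\Gamma^{(b)}$ of $\Gamma^G_{n+1}$. The cut-vertex structure of the cactus forces
\[
d(u,v) = d(u,\text{exit of } \Gamma^{(a)}) + \text{cost of crossing the tree path } a\to b + d(\text{entry of } \Gamma^{(b)},v).
\]
Maximizing over $a,b$ at tree distance $d_G$ (endpoints of a diametral path of $G$, which are leaves), and over $u,v$ by means of $\rho_n$, should yield
\[
diam(\Gamma^G_{n+1})-diam(\Gamma^G_n) = 2^{n+1}+2d_G-4.
\]
Telescoping this from the base case then gives $2^{n+1}+d_G(2n-1)-4n$.

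\textbf{Main obstacle.} The hardest step is the matching lower and upper bounds. I must show that the maximum is attained with the two copies at the ends of a diameter of $G$, rather than with both endpoints inside a single copy or at the internal vertices of a big cycle.

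My first attempt will be a clean closed form for $\rho_n(x)$, which I expect to depend on $x$ only through whether $x$ is a leaf and on $\varepsilon_G(x)$. I will prove it by the same induction, with the cactus property guaranteeing that all geodesics pass through the identified cut vertices $s^ns$ and $t^nt$. I will then compare the finitely many candidate configurations.
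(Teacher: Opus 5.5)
\textbf{There is a genuine gap: the metric picture your induction rests on is wrong.}

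In $\Gamma^G_n$ the constant words of adjacent letters are \emph{adjacent}: for $e=(s,t)$ one has $e(s^n)=t^n$. Moreover, $v\mapsto v_1$ sends every non-loop edge of $\Gamma^G_n$ to an edge of $G$. Together these give $d(x^n,y^n)=d_G(x,y)$, not $d_G(x,y)(2^n-1)$. Likewise, in the merged cycle of length $2^{n+1}$ for the edge $\{x,y\}$, the vertices $x^{n+1}$ and $y^{n+1}$ are neighbours; the vertex opposite $x^{n+1}$ is $x^ny$. So a geodesic pays one step per intermediate tree edge, not half a cycle.

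The exponential term comes from somewhere else. Inside the copy $\Gamma^{(a)}$, the edges $\{a^n,y^n\}$ with $y\sim a$ have been rerouted to the neighbouring copies. Hence the internal metric of $\Gamma^{(a)}$ is that of $\Gamma^G_n$ with these edges deleted, and there $a^n$ and $y^n$ are at distance $2^n-1$. The quantity to carry through the induction is therefore $\sigma_n(a)$, the eccentricity of $a^n$ in $\Gamma^G_n\setminus\{\{a^n,y^n\}: y\sim a\}$, not your $\rho_n(a)$.

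With $\sigma_n$ one gets exactly $d(wa,w'b)=d_a(w,a^n)+d_G(a,b)+d_b(w',b^n)$ whenever $d_G(a,b)\ge 2$, where $d_a,d_b$ are these cut metrics. For the ends $\ell,\ell'$ of a diametral path this gives $\sigma_{n+1}(\ell)=2^{n+1}+d_G-2+\sigma_n(\ell')$ and $\mathrm{diam}(\Gamma^G_{n+1})=2\sigma_n(\ell)+d_G$. This is where the terms $2^{n+1}$ and $d_G(2n-1)$ come from. The two eccentricities genuinely differ: for $G=P_3$ one has $\rho_2(1)=4$ but $\sigma_2(1)=6$, and only the latter gives $\mathrm{diam}(\Gamma^{P_3}_3)=14$.

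\textbf{The target increment fails for $d_G=1$.} Your increment $2^{n+1}+2d_G-4$ cannot hold for every tree, because the proposition itself is false when $d_G=1$. For $G=P_2$ the automaton is the binary odometer, so $\Gamma^G_n$ is a $2^n$-cycle of diameter $2^{n-1}$. The formula instead gives $2^{n+1}-2n-1$, e.g.\ $3$ instead of $2$ for $n=2$. A correct argument must therefore use $d_G\ge 2$, precisely to have two copies at tree distance at least $2$ where the additive formula above applies. Your sketch never uses this and would ``prove'' the false case as well.

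\textbf{The upper bound is left open.} The matching upper bound, which you yourself flag as the main obstacle, is not addressed. As it stands, the proposal is a plan whose key intermediate claims are incorrect, not a proof. For comparison, the paper avoids the recursion: it embeds the copy of $\Gamma^{P_t}_n$ spanned by a diametral path of $G$ and quotes the known diameter for the path case.
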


\begin{proof}
Suppose that $e_1,\ldots, e_{t-1}$ is a path realizing the diameter in $G$ crossing the vertices $1,\ldots, t$. By considering the action of the generators $e_1,\ldots, e_{t-1}$ on the alphabet $\{1,\ldots, t\}^n$ we get a  a copy of the Schreier graph $\Gamma_{n}^{P_{t}}$ inside $\Gamma^G_n$. In fact the action of the generators $e_1,\ldots, e_{t-1}$ is trivial on letters different from $1,\ldots, t$. Starting from a leaf $v$ of $G$ we can consider all paths connecting it to the other leaves. Such paths give rise to sub graphs isomorphic to the Schreier graph $\Gamma_{n}^{P_{s}}$ for some $s\in \mathbb{N}$. Since the action any generator becomes trivial once it processes a vertex not belonging to it, we get that the maximal $\Gamma_{n}^{P_{s}}$ in $\Gamma_{n}^{G}$ is given exactly by $\Gamma_{n}^{P_{t}}$. Now, 
the claim follows from Theorem 4.15 in \cite{articolo0}.
\end{proof}

\begin{rem}  
Notice that the dominant exponential term in the bound of $diam(\Gamma_{n}^G)$ does not depend on $|V(G)|$.
\end{rem}   

Recall that a perfect matching in a graph is a matching that covers every vertex of the graph, i.e. given $G = (V, E)$, a perfect matching in $G$ is a subset $M$ of the edge set $E$, such that every vertex in $V$ is adjacent to exactly one edge in $M$ (see \cite{ising, dimeri}).

To study the number of perfect matching in any $\Gamma_{n}^G$, we first need the following combinatorial result.

\begin{prop} \label{prop:NumberCycles}
    Let $G$ be a tree on $k \geq 2$ vertices and $e$ an edge in $G$. Then the graph $\Gamma^G_n$ has one $e$-cycle of length $2^n$ and for $i < n$ the number of $e$-cycles of length $2^i$ is
    \begin{equation*}
        (k - 2) k^{n - i - 1} .
    \end{equation*}
\end{prop}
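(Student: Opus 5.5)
We count $e$-cycles through their vertex sets, using the description of $e$-cycles given above. Write $e=(s,t)$. The generator $e$ acts on a word $w\in X^n$ as follows: it leaves unchanged every letter after the first letter outside $\{s,t\}$, and it acts on the maximal prefix over $\{s,t\}$ like a binary odometer step, with the possible carry stopping at that first outside letter. Write $w = u\,x\,v$, where $u\in\{s,t\}^i$ is the maximal prefix over $\{s,t\}$ and $x\notin\{s,t\}$ (or $u=w$ when $i=n$). Then the $\langle e\rangle$-orbit of $w$ is exactly the set $\{u'xv : u'\in\{s,t\}^i\}$. On this set, $e$ permutes the prefixes cyclically like an adding machine on $\{s,t\}^i$, so the orbit has length $2^i$. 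This is the statement already recorded in the discussion of Figure~\ref{fig:eCycle}; we take the odometer structure of $e$ as the input and only need to count orbits.

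Orbits of length $1$ and $2$ should not be counted as cycles, but this is harmless. For $i=0$ the orbit is a fixed point carrying a loop labelled $e$. For $i=1$ it is a pair of vertices joined by a double edge. In both cases we adopt the paper's convention that these count as $e$-cycles of lengths $1$ and $2$. With that convention the $e$-cycles of $\Gamma^G_n$ are in bijection with the orbits of $\langle e\rangle$ on $X^n$, and an orbit has length $2^i$ exactly when the common index $i$ is the length of the maximal $\{s,t\}$-prefix.

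Now we count orbits for each $i$.

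\textbf{Case $i=n$.} The orbit consists of all $2^n$ words in $\{s,t\}^n$, so there is exactly one $e$-cycle of length $2^n$.

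\textbf{Case $i<n$.} An orbit is determined by its common suffix $x\,v$, where:
\begin{itemize}
\item $x\in V\setminus\{s,t\}$, which gives $k-2$ choices;
\item $v\in X^{n-i-1}$ is arbitrary, which gives $k^{n-i-1}$ choices.
\end{itemize}
Distinct suffixes give disjoint orbits, so the number of $e$-cycles of length $2^i$ is $(k-2)k^{n-i-1}$.

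As a sanity check, summing the vertex counts gives
\[
2^n+\sum_{i=0}^{n-1}2^i(k-2)k^{n-i-1}=k^n,
\]
because $(k-2)\sum_{i=0}^{n-1}2^ik^{n-1-i}=k^n-2^n$. So the orbits partition $X^n$, as they must.

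The main obstacle is stating precisely why the $\langle e\rangle$-orbit is all of $\{s,t\}^i\,x\,v$. For this we use the self-similar description $e=(e,id,\dots,id)(s\,t)$, with the restriction $e$ sitting at the letter $s$. Acting by $e$ flips the first letter. If that letter was $s$, the action continues into the next letter; otherwise it stops. A carry therefore propagates through the $\{s,t\}$-prefix and dies at $x$, because the restriction of $e$ at any letter outside $\{s,t\}$ is $id$. This is the binary adding machine on $i$ digits, which is transitive on $\{s,t\}^i$ of period $2^i$. An induction on $i$ makes this rigorous.
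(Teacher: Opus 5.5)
Your proof is correct and is essentially the paper's argument. The paper counts the $e$-cycles of length $2^i$ through their unique vertex of the form $s^i x v$ with $x\notin\{s,t\}$, which is the same bijection as your indexing of orbits by the common suffix $xv$; your odometer argument for the orbit structure and your explicit convention for $i=0,1$ simply spell out what the paper takes from the discussion preceding the proposition.
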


\begin{proof}
    Let $e = (s, t)$ with vertices $s$, $t$ in $G$. There is one $e$-cycle that contains the vertex $s^n$. This cycle has length $2^n$ and all other $e$-cycles are shorter. For fixed $i < n$, every vertex with $s^i$ as prefix and not $s$ or $t$ on position $i + 1$ is contained in a unique $e$-cycle of length $2^i$ and every such a cycle has to contain such a word. So we only need to count the words with prefix $s^i$, where in position $i + 1$ there is neither an $s$ nor a $t$. Hence, there are $k - 2$ possible letters for position $i + 1$ and $k$ possible letters for each of the remaining $n - i - 1$ positions. 
\end{proof}

Now we give an explicit formula for the number of perfect matchings in tree graph automata. Note, loops do not make sense in combination with matchings, so we can just ignore them. 

\begin{thm}\label{perfetto}
 Let $G$ be a tree on $k \geq 2$ vertices. The number of perfect matchings in $\Gamma^G_n$ is 
$$
    2^{\frac{k}{2 (k - 1)} \cdot \br*{k^n - 2 k^{n - 1} + 1}}
$$
if $G$ has a perfect matching, and it is 0 if $G$ has no perfect matching.
\end{thm}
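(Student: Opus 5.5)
The plan is to exploit the cactus structure: $\Gamma^G_n$ is a bipartite cactus in which every block is an $e$-cycle of length $2^i$. Loops ($i=0$) are discarded, and the cycles of length $2$ are treated as pairs of parallel edges. A perfect matching must therefore be analysed block by block.

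\emph{Step 1 (peeling leaf blocks).} Take a leaf block of the cactus, i.e.\ an $e$-cycle $C$ of length $L=2^i\geq 2$ attached to the rest of the graph through a single cut vertex $c$. Its $L-1$ non-cut vertices form a path with an odd number of vertices. Such a path cannot be perfectly matched internally, so in any perfect matching $c$ is matched to one of its two neighbours on $C$, and then the rest of $C$ is forced. Thus $C$ contributes exactly two choices and "consumes" $c$; for $L=2$ the two choices are the two parallel edges. Deleting $V(C)$, the remaining graph is a disjoint union of cacti of the same kind. Iterating this, I expect to show that the set $\mathcal{U}$ of blocks that are entirely used is independent of the matching. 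Then either no perfect matching exists, or $\mathcal{U}$ is a family of cycles partitioning $V(\Gamma^G_n)$ and the number of perfect matchings is $2^{|\mathcal{U}|}$. A cleaner equivalent formulation: perfect matchings correspond bijectively to partitions of the vertex set into non-loop blocks, each such block carrying one of its two alternating matchings. Uniqueness of such a partition follows from the same leaf-peeling induction.

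\emph{Step 2 (existence and identification when $G$ has a perfect matching $M$).} Since $G$ is a tree, $M$ is unique and has $k/2$ edges. For $e=(s,t)\in M$, a vertex $w\in X^n$ lies on an $e$-cycle of length $\geq 2$ exactly when its first letter is $s$ or $t$; this uses the description of $e$-cycles before Figure~\ref{fig:eCycle}, where the length is $2^i$ with $i$ the length of the maximal $\{s,t\}$-prefix. Because every letter of $G$ is covered by exactly one edge of $M$, the $e$-cycles of length $\geq 2$ with $e\in M$ partition $X^n$. By Step 1 this family is $\mathcal{U}$. By Proposition~\ref{prop:NumberCycles}, each edge $e$ contributes
\[
1+\sum_{i=1}^{n-1}(k-2)k^{n-i-1}=\frac{k^n-2k^{n-1}+1}{k-1}
\]
such cycles. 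Multiplying by $|M|=k/2$ gives the exponent in the statement.

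\emph{Step 3 (nonexistence when $G$ has no perfect matching).} This is the step I expect to be the main obstacle. The plan is to use the first-letter map $\pi:X^n\to X$. It sends each edge of $\Gamma^G_n$ either to a loop or to the edge of $G$ with the same label, and every $e$-cycle of length $\geq 2$ with $e=(s,t)$ is sent onto $\{s,t\}$ with equally many vertices over $s$ and over $t$ (alternation). Suppose $\Gamma^G_n$ had a perfect matching, so that its blocks $\mathcal{U}$ partition $X^n$. For each edge $e$ of $G$, put weight $w_e=(\text{number of vertices of }e\text{-cycles in }\mathcal{U})/(2k^{n-1})$. Each fibre $\pi^{-1}(x)$ has $k^{n-1}$ vertices, so these weights form a fractional perfect matching of $G$. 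Since $G$ is a tree, hence bipartite, the perfect matching polytope is integral, and so $G$ would have a perfect matching, a contradiction. If this polytope argument turns out awkward to cite, I would instead try a direct leaf-of-$G$ argument. Namely, a leaf $x$ of $G$ with neighbour $y$ forces all words starting with $x$ to lie in $(x,y)$-cycles, and hence forces all words starting with $y$ to be used there too. One would then induct by removing $x,y$ from $G$.
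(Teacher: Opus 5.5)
Your overall plan parallels the paper's: peel extremal cycles, identify the used cycles with the edges of the matching of $G$, and count via Proposition~\ref{prop:NumberCycles}. Your Steps~2 and~3 are more explicit than the paper's sketch. However, Step~1 fails as stated, and both the count and Step~3 depend on it.

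\textbf{Where Step~1 breaks.} Deleting $V(C)$ deletes the cut vertex $c$, so every other cycle through $c$ becomes a path with an odd number of vertices. Its edges are bridges, so the remaining graph is not a cactus ``of the same kind''. For instance, take $\Gamma^{P_4}_2$ with $P_4$ the path $1-2-3-4$. Peeling the leaf digon $\{13,23\}$ turns the $(2,3)$-cycle on $22,33,23,32$ into the path $33-22-32$. After all four leaf digons are peeled, what remains is two $4$-cycles joined by the bridge $\{22,33\}$.

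Once bridges appear, your conclusion no longer follows from the cactus structure, and it is false for general cacti. Consider the bipartite cactus formed by a $4$-cycle $abcd$ with digons $cc'$ and $dd'$. All its blocks are even cycles. Yet it has $4$ perfect matchings, each using only the edge $ab$ of the $4$-cycle, and its vertex set has no partition into blocks. So your ``cleaner equivalent formulation'' is wrong. The statement ``either no perfect matching exists or $\mathcal U$ partitions $V$'' is a property of $\Gamma^G_n$ that must be proved. Step~3 uses exactly this unproved claim (``so that its blocks $\mathcal U$ partition $X^n$'').

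\textbf{Repairing the count.} For any block $B$ and $v\in V(B)$, let $T_v$ be the component of $\Gamma^G_n-E(B)$ containing $v$. Distinct vertices of $B$ lie in distinct such components, so every vertex of $T_v\setminus\{v\}$ has all its neighbours in $T_v$. Hence in every perfect matching, $v$ is matched by an edge of $B$ if and only if $|T_v|$ is odd. (Equivalently, run your peeling in the class of cacti whose blocks are even cycles or single edges.) Thus all perfect matchings agree on which vertices are matched inside which block. The matching $M_0$ you construct in Step~2 matches every vertex inside its $\mathcal U$-cycle and uses no other block. So every perfect matching does the same, and there are exactly $2^{|\mathcal U|}$ of them.

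\textbf{Repairing Step~3.} Take an arbitrary perfect matching $M$ and set $w_e=|\{f\in M : f \text{ labelled } e\}|/k^{n-1}$. A non-loop edge labelled $e=(s,t)$ joins a word beginning with $s$ to one beginning with $t$. Each first-letter fibre has $k^{n-1}$ words. So $w$ is a fractional perfect matching of $G$ with no partition hypothesis, and bipartite integrality (or your leaf-stripping of $G$) finishes the argument.
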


\begin{proof}
To prove our result, we claim that $e$ is involved in the (unique) perfect matching of $G$ if and only if edges belonging to cycles labeled by $e$ are involved in every perfect matching of $\Gamma_n^G$. Since any cycle of even length only admits two perfect matchings, the number of perfect matchings is $2^{i_C}$, where $i_C$ is the number of involved cycles.

If $G$ admits a perfect matching, then $k/2$ edges are involved in it, each labelling $1+\sum_{i=1}^{n-1} (k-2)k^{n-i-1}$ cycles in $\Gamma_n^G$ (see Proposition \ref{prop:NumberCycles}). Therefore, the number of perfect matchings is
$$
2^{\frac{k}{2} \cdot \br*{1 + \sum_{i=1}^{n-1} (k-2)k^{n-i-1}}}= 2^{\frac{k}{2 (k - 1)} \cdot \br*{k^n - 2 k^{n - 1} + 1}}
$$
and the result follows.

We now prove the other claim. Consider the $n-$th Schreier graph $\Gamma_n^G$. 
If $e=(x,y)$ and $f=(y,z)$ are adjacent in $G$ then any corresponding $e-$ and $f-$cycles are connected in $\Gamma_n^G$ in a vertex of type $y^ksw$, with $s\neq y$ and $w$ a word of length $n-k-1$. Edges that do not contain $y$ in $G$ act trivially on such vertices. In this way, by using an inductive argument, we see that adjacent cycles are labeled by adjacent edges in $G$. 
In particular external cycles in $\Gamma_n^G$, i.e. those that are attached just to one other cycle once we remove loops, are labelled by extremal edges in $G$ (the ones containing a leaf). 
 Therefore, starting from extremal vertices (those of degree 2 in $\Gamma_n^G$, once we remove the loops), we can follow a sequence of cycles recovering the structure of $G$. The only way to get a perfect matching is to consider the edges containing extremal vertices.

The structure of $\Gamma_n^G$ consists of cycles that share only one vertex, so if a vertex is taken considering edges of an $e-$cycle this implies that the adjacent $f-$ cycle cannot contain edges of the perfect matching. We can proceed in this alternating manner considering the edges of adjacent cycles and since the structure of the graph repeats that of $G$, the claim follows.
\end{proof}

From the proof of the previous theorem, we get an explicit expression for the generating function. We denote by $\mathcal{M}_G$ the set of edges in the perfect matchings in the tree $G$ (when it exists).

\begin{thm}
    Let $G$ be a (labeled) tree on $k>2$ vertices that has a perfect matching. The generating function of the perfect matchings of $\Gamma_n^G$ is given by
 $$
2^{\frac{k}{2 (k - 1)} \cdot \br*{k^n - 2 k^{n - 1} + 1}}\prod_{e \in \mathcal{M}_G} e^{k^n/2}.
$$
\end{thm}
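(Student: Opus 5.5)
The plan is to reduce the generating function to a single monomial multiplied by the count from Theorem~\ref{perfetto}. We weight each edge of $\Gamma_n^G$ by the formal variable given by its label $e\in E(G)$, and weight a perfect matching by the product of the weights of its edges.

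First, we reuse the structural claim from the proof of Theorem~\ref{perfetto}: every perfect matching of $\Gamma_n^G$ uses only edges lying on $e$-cycles with $e\in\mathcal{M}_G$. Moreover, on each such cycle it takes exactly one of the two alternating perfect matchings of that cycle. Cycles labelled by edges outside $\mathcal{M}_G$ contribute no edges.

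Second, we show that all perfect matchings have the same weight. On an $e$-cycle of length $2^i$, both alternating matchings consist of $2^{i-1}$ edges, all labelled $e$. Hence switching between the two choices on any cycle does not change the monomial. The generating function is therefore
$$
(\text{number of perfect matchings})\cdot(\text{common monomial}),
$$
and the first factor is exactly $2^{\frac{k}{2 (k - 1)} \cdot \br*{k^n - 2 k^{n - 1} + 1}}$ by Theorem~\ref{perfetto}.

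Third, we determine the common monomial by counting, for each label, how many vertices are covered. Every vertex of $\Gamma_n^G$ is matched exactly once, so the matching has $k^n/2$ edges in total, all carrying labels from $\mathcal{M}_G$. This gives $\prod_{e\in\mathcal{M}_G} e^{a_e}$ with $\sum_{e\in\mathcal{M}_G} a_e = k^n/2$. Using Proposition~\ref{prop:NumberCycles}, we compute $a_e$ as half the total length of the $e$-cycles, namely
$$
a_e=\tfrac12\Big(2^n+\sum_{i<n}(k-2)k^{n-i-1}2^i\Big).
$$
We then check this against the exponent $k^n/2$ attached to each $e\in\mathcal{M}_G$ in the statement.

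The main obstacle is this final identification. The exponent $k^n/2$ is the total number of matched edges, and it has to be distributed over the $k/2$ labels in $\mathcal{M}_G$ according to the convention by which the statement records the monomial. Care is needed on two points. One is that the counting of covered vertices per label is consistent, since each vertex lies on a unique matched cycle. The other is that loops, which arise when a letter is fixed by a generator, are excluded throughout, as the paper does.
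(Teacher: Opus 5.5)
Your first two steps are correct and follow the paper's route: every perfect matching carries the same monomial, so the generating function is the count from Theorem~\ref{perfetto} times that monomial. The gap is the third step. It is not a matter of convention: the per-label exponent $k^n/2$ cannot be proved, because it is false for $k>2$.

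Your own bookkeeping already shows this. Proposition~\ref{prop:NumberCycles} gives every label the same cycle statistics, so all $a_e$ are equal. Since $\sum_{e\in\mathcal{M}_G}a_e=k^n/2$ and $\abs{\mathcal{M}_G}=k/2$, this forces $a_e=k^{n-1}$. An exponent $k^n/2$ on each of the $k/2$ labels would give total degree $k^{n+1}/4\neq k^n/2$.

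You can also see it directly. Loops cannot be matched, so your sum must run over $1\le i\le n-1$. Since the $e$-orbits partition the vertex set, $\sum_{i=0}^{n-1}(k-2)k^{n-i-1}2^i=k^n-2^n$, whence
\[
a_e=\tfrac12\bigl(2^n+(k^n-2^n)-(k-2)k^{n-1}\bigr)=k^{n-1}.
\]
The value $k^n/2$ appears only if the $i=0$ term is kept, that is, if each of the $(k-2)k^{n-1}$ loops labelled $e$ is counted as half a matched edge. This is exactly the step you flagged about loops, and it is what the paper's own computation does: the $i=n-1$ term of its sum $\sum_{i=0}^{n-1}(k-2)k^i2^{n-i-2}$ is $(k-2)k^{n-1}/2$.

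The smallest instance makes this concrete. Take $G=P_4$ with edges $a=(1,2)$, $b=(2,3)$, $c=(3,4)$, and $n=1$. Then $\Gamma_1^G$ is $G$ with every edge doubled, plus loops. Each of its $4$ perfect matchings consists of one $a$-edge and one $c$-edge, so the generating function is $4ac$, while the statement gives $4a^2c^2$.

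So what your argument actually proves is
\[
2^{\frac{k}{2(k-1)}(k^n-2k^{n-1}+1)}\prod_{e\in\mathcal{M}_G}e^{k^{n-1}},
\]
and you should state this rather than defer to an unspecified convention for the monomial.

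If you want the monomial without relying on the cycle-by-cycle structure borrowed from Theorem~\ref{perfetto}, project to the first letter. A non-loop edge labelled $e=(s,t)$ always swaps the first letter between $s$ and $t$. Hence this projection maps $\Gamma_n^G$ minus its loops onto $G$, preserving labels, with fibres of size $k^{n-1}$. A perfect matching therefore induces $m\colon E(G)\to\{0,1,2,\dots\}$ with $\sum_{e\ni x}m(e)=k^{n-1}$ at every vertex $x$ of $G$. Peeling leaves of the tree then forces $m(e)=k^{n-1}$ for $e\in\mathcal{M}_G$ and $m(e)=0$ otherwise. This gives both the support of the monomial and the exponent $k^{n-1}$ at once.
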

\begin{proof}
   By using the counting in the proof of Theorem \ref{perfetto} the number of perfect matchings is 
    $$
2^{\frac{k}{2 (k - 1)} \cdot \br*{k^n - 2 k^{n - 1} + 1}}
 $$
 If $e$ appears in the perfect matching of $G$, then the cycles labelled by $e$ of length $l$ involve $l/2$ edges in a perfect matching of $\Gamma_n^G$. So, by using Proposition \ref{prop:NumberCycles}, we know that there is one $e-$cycle of length $2^n$ and $ (k - 2) k^{n - i - 1} $ $e-$cycle of length  $2^i$, for $i=1, \ldots, n-1$. Since only half of such $e-$dges are involved in a perfect matching, we get that such number is
 $$
 2^{n-1}+ \sum_{i=0}^{n-1} (k-2)k^i2^{n-i-2}= 2^{n-1}+ (k-2)2^{n-2}\sum_{i=0}^{n-1} \left(\frac{k}{2}\right)^i.
 $$
 The result follows by observing that
 $$
\sum_{i=0}^{n-1} \left(\frac{k}{2}\right)^i=\frac{2^n-k^n}{2^{n-1}(2-k)}.
 $$
 
    \begin{equation*}
    \end{equation*}
\end{proof}

\section{The Tutte polynomial}

In this section, we give an explicit expression of the Tutte polynomial $T(\Gamma_n^G, x,y)$ for any tree graph automata. 
We recall that the Tutte polynomial of a graph $G$ is a 2-variable polynomial 
recursively defined as follows:
$$
T(G,x,y)=\begin{cases}
1 &\textrm{if $G$ is a vertex,} \\
x T(G\setminus e, x,y) &\textrm{if $e$ is a bridge,} \\
y T(G\setminus e, x,y) &\textrm{if $e$ is a loop,} \\
T(G\setminus e, x,y)+ T(G/ e, x,y) &\textrm{otherwise,}
\end{cases}
$$

where $G\setminus e$ and $G/e$ represent the deletion and the contraction of an edge $e$, respectively.

The Tutte polynomial can be evaluated at particular numbers $(x,y)$ to give
numerical graphical invariants, including the number of spanning trees, the number
of forests, the number of connected spanning subgraphs, the dimension of the bicycle
space, etc. (see, for instance, \cite{tutto} and references therein).

\begin{thm}\label{tutto}
Let $G$ be a tree on $k \geq 2$ vertices. The Tutte polynomial of $\Gamma_n^G$ is given by
\begin{equation*}
\begin{split}
    T(\Gamma_n^G, x,y) &= y^{(k-1)(k-2)k^{n-1}}\cdot \left(y+x+\cdots+ x^{2^n-1}\right)^{k-1} \\
        &\quad \cdot \prod_{i=2}^{n-1}\left(y+x+\cdots+ x^{2^i-1}\right)^{(k-1)(k-2)k^{n-i-1}}
\end{split}
\end{equation*}
\end{thm}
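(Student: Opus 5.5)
The plan is to use the multiplicativity of the Tutte polynomial over blocks. If a graph $H$ is the one-point union of $H_1$ and $H_2$, i.e.\ they share exactly one cut vertex, then $T(H,x,y)=T(H_1,x,y)\,T(H_2,x,y)$. This follows directly from the deletion--contraction recursion, since an edge is a bridge or a loop in $H$ if and only if it is one in the block containing it. Iterating over the block decomposition gives $T(H,x,y)=\prod_{B}T(B,x,y)$, the product running over all blocks $B$ of $H$.

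By the discussion in Section~2, $\Gamma_n^G$ is a cactus in which every block is an $e$-cycle for some edge $e$ of $G$. So it suffices to do two things: compute the Tutte polynomial of a single cycle, and count the blocks of each length. For the first, I would prove by induction on $m$ that
\[
T(C_m,x,y)=y+x+x^2+\cdots+x^{m-1}.
\]
Delete-contract one edge: deleting it leaves a path on $m-1$ edges, contributing $x^{m-1}$, and contracting it gives $C_{m-1}$. The base case is a loop, $T(C_1)=y$.

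For the block count I would invoke Proposition~\ref{prop:NumberCycles}. For each of the $k-1$ edges $e$ of $G$ there is exactly one $e$-cycle of length $2^n$, and for $i<n$ there are $(k-2)k^{n-i-1}$ $e$-cycles of length $2^i$. Taking $i=0$ gives the loops, which are the fixed points of the generator $e$. There are $(k-1)(k-2)k^{n-1}$ of them in total, each contributing a factor $y$. The $k-1$ maximal cycles contribute $\bigl(y+x+\cdots+x^{2^n-1}\bigr)^{k-1}$, and for $2\le i\le n-1$ the cycles of length $2^i$ contribute the exponent $(k-1)(k-2)k^{n-i-1}$. Multiplying these factors yields the displayed formula.

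The main obstacle is the bookkeeping of the $i=1$ level, which the displayed product omits. At this level the generator $e$ swaps two vertices $u=sw$ and $v=tw$ with a letter $w_1\notin\{s,t\}$, so the $e$-"cycle" of length $2$ is really the pair $u\leftrightarrow v$. I would first fix precisely how $\Gamma_n^G$ records such an involutive action. If it is recorded as a double edge, each such block contributes $y+x$. If it is recorded as a single edge, which is the convention under which the figures show $2$-cycles as simple edges, each such block is a bridge contributing $x$. I would then verify the formula under the paper's convention. If the stated product does not account for these blocks, the correct statement should include the corresponding factor raised to the power $(k-1)(k-2)k^{n-2}$.

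Finally, I would double-check the case $n=1$ and the cut-vertex claim. The latter is the assertion from Section~2 that distinct $e$-cycles meet in at most one vertex, which is what makes every cycle a whole block.
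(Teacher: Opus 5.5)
Your route is the paper's route: multiplicativity of $T$ over the blocks of the cactus $\Gamma_n^G$, the identity $T(C_m)=y+x+\cdots+x^{m-1}$, and the cycle counts of Proposition~\ref{prop:NumberCycles}. Your worry about the $i=1$ level is justified. It is not a convention issue: the displayed formula is wrong for $k\ge 3$, $n\ge 2$, so you should settle the point rather than leave it conditional.

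Since $\Gamma_n^G$ is connected on $k^n$ vertices, the $x$-degree of its Tutte polynomial is the rank $k^n-1=\sum_C(|C|-1)$. The displayed formula accounts only for $(k-1)(2^n-1)+\sum_{i=2}^{n-1}(k-1)(k-2)k^{n-i-1}(2^i-1)$. This falls short by $(k-1)(k-2)k^{n-2}$, which is exactly the number of $2$-orbits $\{sw,tw\}$.

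For example, take $G=P_3$ and $n=2$. The displayed formula is $y^6(y+x+x^2+x^3)^2$, of $x$-degree $6$ instead of $8$. The vertices $13$ and $31$ are joined to the rest only through the orbits $\{13,23\}$ and $\{21,31\}$, which the formula ignores. So the statement fails however these orbits are drawn.

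As for the convention, the paper's text fixes it.
\begin{itemize}
\item The paper says $\Gamma_n$ is $2|Q|$-regular with an edge $u\to q(u)$ for each generator. That forces each $2$-orbit to carry two parallel edges, $sw\to tw$ and $tw\to sw$.
\item The perfect-matching count uses the same convention: each $i=1$ cycle contributes a factor $2$.
\item So does the Szeged computation: $D_1$ counts two special edges in each $2$-cycle.
\end{itemize}
Hence each such block contributes $y+x$, and the correct formula is the displayed one with the product starting at $i=1$:
\[
T(\Gamma_n^G,x,y)=y^{(k-1)(k-2)k^{n-1}}\left(y+x+\cdots+x^{2^n-1}\right)^{k-1}\prod_{i=1}^{n-1}\left(y+x+\cdots+x^{2^i-1}\right)^{(k-1)(k-2)k^{n-i-1}}.
\]
The paper's proof is the same short argument as yours and simply drops the $i=1$ level. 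The omission carries over to the spanning-tree and spanning-forest counts in Corollary~\ref{corollario}.

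With this correction your proof is complete. The case $n=1$ is consistent, since then the only non-loop blocks are the $k-1$ cycles of length $2=2^n$.
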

\begin{proof}
It is well known (see for instance \cite{tutte}) that the Tutte polynomial of a cactus graph made by cycles is the product of the Tutte polynomials of the composing cycles. The Tutte polynomial of a cycle of length $m$ is given by 
\begin{equation*}
    y + x + \cdots + x^{m-1}
\end{equation*}
(see, for instance, \cite[Lemma 2.4]{donno2}). The theorem follows by applying Proposition~\ref{prop:NumberCycles}.   
\end{proof}
We deduce a series of corollaries.

\begin{cor}\label{corollario}
Let $G$ be a tree on $k \geq 2$ vertices and consider the graph $\Gamma_n^G$. 
\begin{itemize}
    \item The number of spanning trees in $\Gamma_n^G$ is 
    $$
    T(\Gamma_n^G,1,1)= 2^{n +\frac{\br*{k^{n - 2}(2k - 1) - 1} (k - 2)}{k - 1}} .
    $$

\item The number of spanning forests in $\Gamma_n^G$ is 
    $$
    T(\Gamma_n^G,2,1)= \left(2^{2^n}-1\right)^{k-1} \prod_{i=2}^{n-1} \left(2^{2^i}-1\right)^{(k-1)(k-2)k^{n-i-1}} .
    $$
    
\item The chromatic polynomial $\chi(\overline{\Gamma_n^G},\lambda)$ of $\Gamma_n^G$ without loops is given by 
    \begin{equation*}
        \begin{split}
            (-1)^{k+n}\lambda T(\overline{\Gamma_n^G},1-\lambda, 0)&= (-1)^{k+1} \left((1-\lambda)^{2^n}-\lambda+1\right)^{k-1} \\
                &\quad \cdot \prod_{i=2}^{n-1} \left((1-\lambda)^{2^i}-\lambda+1\right)^{(k-1)(k-2)k^{n-i-1}} .
        \end{split}
    \end{equation*}
\end{itemize}
\end{cor}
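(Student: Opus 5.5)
The three items are specializations of the factorization in Theorem \ref{tutto}, and the plan is to substitute into each factor and simplify the exponents using the cycle count of Proposition \ref{prop:NumberCycles}. Because $\Gamma_n^G$ is a cactus whose blocks are cycles (together with loops, which contribute the factor $y^{(\cdot)}$), each block contributes independently. So every evaluation factors as a product over cycles, and I will treat the three specializations separately.

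\textbf{Spanning trees.} At $(x,y)=(1,1)$ a cycle factor $y+x+\cdots+x^{m-1}$ becomes $m$, and the loop factor becomes $1$. Hence $T(\Gamma_n^G,1,1)$ is the product of the lengths of all cycles. This matches the direct count, since a spanning tree of a cactus deletes exactly one edge from each cycle. Using Proposition \ref{prop:NumberCycles}, each of the $k-1$ edges of $G$ gives one cycle of length $2^n$ and $(k-2)k^{n-i-1}$ cycles of length $2^i$ for $i<n$. Cycles of length $2^0=1$ are loops and contribute nothing. I will also check whether cycles of length $2$ (double edges) are meant to be included, since the product in Theorem \ref{tutto} starts at $i=2$. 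The exponent of $2$ is then
\[
(k-1)\Bigl(n+\sum_{i=1}^{n-1} i\,(k-2)k^{n-i-1}\Bigr),
\]
and the arithmetico-geometric sum $\sum_i i\,k^{n-i-1}$ has to be brought into closed form. I expect this simplification to be the main obstacle: the stated closed form is $n+\frac{(k^{n-2}(2k-1)-1)(k-2)}{k-1}$, and reconciling it with the sum (including the factor $k-1$ and the $i=1$ term) requires careful bookkeeping. I would first test small cases, such as $k=3,n=2$, before committing to the manipulation.

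\textbf{Spanning forests.} At $(2,1)$ a cycle factor becomes $1+2+\cdots+2^{m-1}=2^m-1$, and loop factors become $1$. Substituting $m=2^n$ and $m=2^i$ into Theorem \ref{tutto} gives the stated product directly.

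\textbf{Chromatic polynomial.} Remove the loops to get $\overline{\Gamma_n^G}$. The Tutte polynomial of $\overline{\Gamma_n^G}$ is the same product without the $y$-power, and this needs no new work since the block factorization still applies. Then use the standard identity $\chi(H,\lambda)=(-1)^{|V|-c}\lambda^{c}\,T(H,1-\lambda,0)$ with $c=1$. At $y=0$ each cycle factor is $x+\cdots+x^{m-1}=\frac{x^m-x}{x-1}$, and with $x=1-\lambda$ this equals $\frac{(1-\lambda)^m-(1-\lambda)}{-\lambda}$. Multiplying the factors and tracking the powers of $-\lambda$ and the overall sign $(-1)^{|V|-1}=(-1)^{k^n-1}$ should give the displayed formula. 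The displayed factors $(1-\lambda)^{2^i}-\lambda+1$ must be checked against $(1-\lambda)^{m}-(1-\lambda)$, and I expect possible sign or normalization adjustments here. I would verify the final formula on a single even cycle, where $\chi=(\lambda-1)^m+(\lambda-1)$.
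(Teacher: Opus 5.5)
Your route is the paper's: specialize the block factorization of Theorem~\ref{tutto}. But the proposal stops exactly at the steps that decide the matter, and those steps cannot be closed in the direction you hope, because the displayed formulas are not what a correct block count gives. In item~1 you are right to keep $i=1$. By Proposition~\ref{prop:NumberCycles}, each edge of $G$ yields $(k-2)k^{n-2}$ $e$-cycles of length $2$. These are digons of the Schreier multigraph, and the paper itself counts them as cycles in Theorem~\ref{perfetto} and in the Szeged computation. Each is a block with Tutte polynomial $x+y$. Theorem~\ref{tutto} as printed starts its product at $i=2$ and so drops $(x+y)^{(k-1)(k-2)k^{n-2}}$. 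The paper's proof of the corollary simply specializes that printed product. Indeed, using $(k-1)^2\sum_{i=2}^{n-1} i\,k^{n-1-i}=k^{n-2}(2k-1)-(k-1)n-1$, the stated exponent equals $(k-1)n+(k-1)(k-2)\sum_{i=2}^{n-1} i\,k^{n-1-i}$. Your exponent simplifies, via $(k-1)^2\sum_{i=1}^{n-1} i\,k^{n-1-i}=k^n-(k-1)n-1$, to $n+(k-2)\frac{k^n-1}{k-1}$. This exceeds the stated one by $(k-1)(k-2)k^{n-2}$, which is exactly the number of digons. Your own test case decides it. For $k=3$, $n=2$ (the tangled odometer), $\Gamma_2^G$ is two $4$-cycles sharing a vertex, a digon hanging off each, plus loops. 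It therefore has $4\cdot 4\cdot 2\cdot 2=64$ spanning trees, while the statement gives $2^4=16$, the count after collapsing each digon to a single edge. For $n=1$, $k\ge 3$, the stated exponent is not even an integer.

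Item~2 is inconsistent with your item~1: there you quote the printed product from $i=2$. But each digon contributes $x+y\mapsto 3=2^{2^1}-1$, so the product must run over $1\le i\le n-1$. For $k=3$, $n=2$ one gets $15^2\cdot 3^2=2025$, not $225$. If digons are collapsed instead, each resulting bridge contributes $x\mapsto 2$, so the stated formula fails in either reading once $k\ge 3$, $n\ge 2$. In item~3 your ingredients are right, and carrying them through gives
\[
\chi(\overline{\Gamma_n^G},\lambda)=(-1)^{k^n-1}\lambda\prod_{C}\frac{(1-\lambda)^{\abs{C}}-(1-\lambda)}{-\lambda}=\lambda^{1-N}\prod_{C}\bigl((\lambda-1)^{\abs{C}}+\lambda-1\bigr).
\]
Here $C$ runs over the $N=(k-2)k^{n-1}+1$ non-loop cycle blocks, digons included, and the sign disappears because $k^n-1+N=2(k-1)k^{n-1}$ is even. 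The display in the statement has three defects. It lacks the factor $\lambda^{1-N}$; the paper's ``$n$ factors'' should be $N$. It uses $(-1)^{k+n}$ where $(-1)^{k^n-1}$ is needed. And it has the wrong sign on the linear term: $(1-\lambda)^{2^i}-\lambda+1$ vanishes at $\lambda=2$, whereas the left-hand side is $\pm\chi(\overline{\Gamma_n^G},2)=\pm 2$, since the graph is connected and bipartite. The single-even-cycle check you plan (the case $k=2$) detects exactly this. So what is missing is not a trick but a correction. Completed, your argument proves amended formulas: products over $1\le i\le n-1$, and the normalization $\lambda^{1-N}$. For $k\ge 3$, $n\ge 2$, the statement as printed survives only in item~1, and only under the digon-collapsed reading.
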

\begin{proof}
   For the first statement, observe that, by using Theorem \ref{tutto}, each factor of the product contributes with $(2^i)^{(k-1)(k-2)k^{n-i-1}}$. Thus,
    \begin{equation*}
        \begin{split}
        T(\Gamma_n^G,1,1) &= 2^{n(k-1)}\cdot \prod_{i=2}^{n-1}(2^i)^{(k-1)(k-2)k^{n-i-1}}\\
            &= 2^{n(k-1)}\cdot \left(2^{\sum_{i=2}^{n-1}i/k^i}\right)^{(k-1)(k-2)k^{n-1}}\\
            &= 2^{n(k-1)}\cdot 2^{(2-k)n-\frac{k-2}{k-1}+\frac{k^{n-2}(2k-1)(k-2)}{k-1}}\\
            &= 2^{n +\frac{\br*{k^{n - 2}(2k - 1) - 1} (k - 2)}{k - 1}}, 
        \end{split}
    \end{equation*}
   where  
   $$
   \sum_{i=1}^n i x^i = x\frac{1-x^n}{(1-x)^2} - \frac{n x^{n+1}}{1-x}
   $$
   is used with $x = 1 / k$, which is true for $x \neq 1$.

   For the second equality, observe that inside any bracket of the Tutte polynomial, we get
   $$
   \sum_{j=0}^{2^i-1}2^j=2^{2^i}-1.
   $$
   
   For the third formula, it refers to the graph without loops, so we are deleting the factor $y^{(k-1)(k-2)k^{n-1}}$ from the polynomial.  Observe that inside any bracket of the Tutte polynomial, we get
   $$
   \sum_{j=1}^{2^i-1}(1-\lambda)^j=\dfrac{(1-\lambda)^{2^i}-(1-\lambda)}{-\lambda}.
   $$
  
   Since there are $n$ factors in the decomposition of the Tutte polynomial, we get the assert.
\end{proof}

\section{Wiener and Szeged Index of Tree Graph Automata}

We start this section by recalling the following definitions.

The \textit{Wiener index} $W(G)$ is the sum of all distances in a graph, formally
\begin{equation*}
    W(G) \defeq \sum_{u, v \in V(G)} d_G(u, v) = \frac{1}{2} \sum_{v \in V(G)} d_G^v(v) .
\end{equation*}
where $d_G^v(v)$ i the total-distance from $v$ (i.e. the sum of distances of all vertices in $G$ from $v$).

The \textit{Szeged index} of $G$ is
\begin{equation*}
    \Sz(G) \defeq \sum_{\substack{e \in E(G) \\ e = \{u, v\}}} n(u, v) n(v, u) ,
\end{equation*}
where $n(u, v)$ denotes the number of vertices in $G$ closer to $u$ than to $v$. 

The main objective of this chapter is to show the following theorem. 

\begin{thm} \label{thm:TreeGraphAutomataWI}
    Let $G$ be a tree on $k$ vertices, then
    \begin{equation*}
        \begin{split}
            W\br*{\Gamma^G_n} &= 
                \frac{(k - 1)^2 (2 k^2 - 2 k - 1)}{k^3 (2 k - 1)} \cdot 2^n k^{2 n} 
                    - \frac{2 (k - 1)^2}{k^2} \cdot n k^{2 n} \\
                &\quad+ \frac{2 (k + 1) (k - 1)}{k^3} \cdot k^{2 n} 
                    - \frac{2 (k + 1) (k - 1)}{k (2 k - 1)} \cdot k^n \\
                &\quad+ \br*{\frac{2}{k^2} \cdot n k^{2 n} 
                        - \frac{(k^2 + 2)}{k^3 (k - 1)} \cdot k^{2 n}
                        + \frac{(k + 2)}{k^2 (k - 1)} \cdot k^n}
                    W(G) .
        \end{split}
    \end{equation*}
\end{thm}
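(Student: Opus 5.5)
The plan is to go through the Szeged index and then transfer to the Wiener index. $\Gamma^G_n$ is a bipartite cactus graph in which every block is an even cycle, since the $e$-cycles have length $2^i$ and the loops can be ignored. For such graphs Theorem~\ref{thm:SzToWI} (Hammer) gives $\Sz(\Gamma^G_n) = 2\,W(\Gamma^G_n)$, or the precise variant of that relation stated there. So it suffices to compute $\Sz(\Gamma^G_n)$ exactly.

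For an edge $uv$ lying on an $e$-cycle $C$ of length $2^i$, the quantity $n(u,v)$ is the total size of the ``half'' of the cactus hanging off one side of $C$. Removing the antipodal pair of edges of $C$ (the edges $e_C$ and $e_C'$ of Figure~\ref{fig:eCycle}) splits $C$ into two arcs of $2^{i-1}$ vertices each. Then $n(u,v)$ is the sum, over the vertices $w$ of the arc containing $u$, of the size $|B_w|$ of the branch of $\Gamma^G_n$ attached at $w$ away from $C$, including $w$ itself. So the first step is to describe, for each vertex $w$ of $C$, the size of the component of $\Gamma^G_n$ minus the edges of $C$ that contains $w$.

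The vertices of $C$ have the form $p\,x\,w'$, where $p\in\{s,t\}^i$ and $x\notin\{s,t\}$ is fixed. Such a component consists of all words reachable using generators other than $e$, and the self-similarity suggests its size is determined by the last letter of $p$ together with which side of $e$ in $G$ we are on. Concretely, I expect the branch at a vertex whose relevant letter is $s$ to be a product involving $k_s$, the number of vertices of the component of $G-e$ containing $s$, with $k_s+k_t=k$. From these sizes I will get a closed form for $n(u,v)\,n(v,u)$ on each edge of $C$: one half of $C$ has roughly half its vertices ending in each letter pattern, and the product should come out as something like $(2^{i-1}\text{-weighted } k_s\text{-term})\cdot(k_t\text{-term})$ times powers of $k$.

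The next step is to sum over the $2^i$ edges of each cycle, over the cycle lengths (one cycle of length $2^n$, and $(k-2)k^{n-i-1}$ cycles of length $2^i$ by Proposition~\ref{prop:NumberCycles}), and over the edges $e$ of $G$. The dependence on $e$ should enter only through $k_s k_t$ and through $k_s+k_t=k$. Since $\sum_{e} k_s k_t = W(G)$ for a tree, this explains why only $W(G)$ appears in the final formula. The remaining terms have coefficients depending on $n$ and $k$ only, via geometric sums of the form $\sum_i 2^i k^{n-i}$ and $\sum_i i\,k^{\cdot}$. These produce the $2^n k^{2n}$, $n k^{2n}$, $k^{2n}$ and $k^n$ terms, and the factor $2k-1$ in the denominator is a clear trace of a sum of $(2k)^{\cdot}$ or $(k/2)^{\cdot}$ terms.

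The main obstacle will be the exact branch-size computation. Vertices on a short cycle with $i<n$ carry a suffix in which the letter $x$ at position $i+1$ matters: branches attached through the tail $x w'$ can be huge, essentially the rest of the graph. So $n(u,v)$ is not symmetric, and the arc containing the ``exit'' vertex (where the prefix allows leaving via the suffix) gets almost all of the $k^n$ vertices. I would first handle the full $2^n$-cycle through $s^n$, where there is no tail, and then treat a short cycle as that picture scaled to prefix length $i$, plus a single heavy branch of size $k^n - (\text{local part})$ attached at the distinguished vertex. I would verify the final expression against small cases ($n=1$, where $\Gamma^G_1 = G$ with loops and so $W=W(G)$; and the tangled odometer $P_3$ with $n=2$) before trusting the algebra.
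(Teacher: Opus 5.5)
\textbf{There is a genuine gap.} Your framework is the paper's. You reduce to $\Sz(\Gamma^G_n)=2W(\Gamma^G_n)$ via Theorem~\ref{thm:SzToWI}, cut each cycle edge together with its antipode, sum over the $e$-cycles counted in Proposition~\ref{prop:NumberCycles}, and collapse the $e$-dependence to $\sum_{(s,t)}k_sk_t=W(G)$, where $k_s=n(s,t)$ and $k_t=n(t,s)$. But the proposal stops where the proof's content begins, namely the exact values of $n(u,v)$, and your guesses about them are wrong.

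Minor slip first: for a general edge $uv$ the cut is $uv$ together with its antipode. The pair $\{e_C,e_C'\}$ is the cut only for the two special edges.

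More seriously, the branch hanging at a cycle vertex $p\,x\,w'$ is not determined by the last letter of $p$. A generator $f\neq e$ only rewrites the \emph{leading} run of equal letters of $p$, so the branch depends on the first letter and the length of that run. The branch sizes are also not products in $k_s$. Already on the $4$-cycle through $tt,\,st,\,ts,\,ss$ in $\Gamma^G_2$ the branches have sizes $k_s$, $k_t$, $kk_t-k_s$, $kk_s-k_t$ respectively, and for longer runs they become alternating sums. Summing these vertex by vertex is the hard way. Moreover, your expected per-edge shape (a $2^{i-1}$-weighted $k_s$-term times a $k_t$-term) is not what comes out for most edges.

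The missing idea is to count each side of the two-edge cut directly rather than through individual branches.

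\emph{Non-special edge.} The side avoiding $e_C$ consists of $2^{i-1}$ consecutive cycle vertices $y\,p_i\,w$. Their prefixes $y$ of length $i-1$ run through $\{s,t\}^{i-1}$ exactly once. The branch at $y\,s\,w$ or $y\,t\,w$ is $D_y\times\{p_i w\}$ for a set $D_y\subseteq V^{i-1}$ depending only on $y$, and the sets $D_y$ partition $V^{i-1}$. So that side has exactly $k^{i-1}$ vertices, and the contribution is $k^{i-1}(k^n-k^{i-1})$, independent of $e$ (Lemma~\ref{lem:NonSpecialContribution}).

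\emph{Special edges.} Only these see the tree. Their small side has $k_sk^{i-1}$ or $k_tk^{i-1}$ vertices (Lemmas~\ref{lem:ContributionSpecialC2n} and~\ref{lem:ContributionSpecial}). Which case occurs on a cycle of length $2^i<2^n$ depends on which side of $G-e$ contains the letter in position $i+1$. You therefore need the refined count of $(k_t-1)k^{n-1-i}$ versus $(k_s-1)k^{n-1-i}$ such cycles, not just their total $(k-2)k^{n-1-i}$.

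Only with these facts in hand does the summation produce the stated formula, using $k_s+k_t=k$ and $k_s^2+k_t^2=k^2-2k_sk_t$.
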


In essence, this theorem tells us that the Wiener index of the $n$-th Schreier graph of a graph automaton given by a tree $G$ depends only on $n$, and by the size $k$ and the Wiener index of $G$. Furthermore, the term $2^n k^{2 n}$ of highest order is independent of the structure of the tree. 

The most important ingredient in the proof is the following theorem. 

\begin{thm}[\cite{stefano}, Theorem~3.1] \label{thm:SzToWI}
    Let $G$ be a cactus graph, then
    \begin{equation*}
        \Sz(G) \leq 2 \, W(G), 
    \end{equation*}
    with equality if and only if every block of $G$ is a cycle of even length. 
\end{thm}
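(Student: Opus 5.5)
The proof rewrites both indices as sums over unordered pairs of vertices and compares them pair by pair, block by block. Interchanging the order of summation in the definition of the Szeged index gives
\begin{equation*}
    \Sz(G) = \sum_{\{x, y\} \subseteq V(G)} \sigma(x, y),
\end{equation*}
where $\sigma(x,y)$ is the number of edges $e = \{u, v\}$ such that one of $x, y$ is strictly closer to $u$ and the other strictly closer to $v$. Indeed, $n(u,v)\,n(v,u)$ counts exactly the unordered pairs split by the edge $uv$. We also have $W(G) = \sum_{\{x,y\}} d_G(x,y)$. It therefore suffices to prove $\sigma(x,y) \le 2\, d_G(x,y)$ for every pair, and to identify when equality holds.

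Next we use the cactus structure. Fix an edge $e = uv$ lying in a block $B$. For any vertex $z$, let $\pi_B(z)$ be the cut vertex, or the vertex of $B$ itself, through which every path from $z$ enters $B$. Then $d(z,u) - d(z,v) = d(\pi_B(z), u) - d(\pi_B(z), v)$. Hence $e$ separates $x$ and $y$ if and only if it separates $\pi_B(x)$ and $\pi_B(y)$ inside $B$. Moreover, the geodesic from $x$ to $y$ passes through a chain of blocks $B_1, \dots, B_r$ with entry and exit points $a_j, b_j \in B_j$. We get $d(x,y) = \sum_j d_{B_j}(a_j, b_j)$, and for every block off this chain the two projections coincide, so no edge of such a block separates $x$ and $y$. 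Thus $\sigma(x,y) = \sum_j \sigma_{B_j}(a_j, b_j)$, and the problem reduces to a single block: show $\sigma_B(a,b) \le 2\, d_B(a,b)$ for distinct $a, b \in B$.

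There are three cases for a single block $B$.
\begin{itemize}
\item If $B$ is a bridge, then $\sigma_B(a,b) = 1 < 2 = 2d_B(a,b)$.
\item If $B$ is an even cycle $C_{2l}$, then every edge $uv$ cuts the cycle into two arcs of $l$ vertices each, with no equidistant vertex. These half-cycles are exactly the cuts obtained by removing $uv$ and its antipodal edge. A direct count shows that $a$ and $b$ lie on different sides for exactly $2d(a,b)$ edges: the $d(a,b)$ edges of the geodesic from $a$ to $b$, together with their antipodes. So equality holds.
\item If $B$ is an odd cycle $C_{2l+1}$, each edge leaves one antipodal vertex equidistant from its ends. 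Repeating the count gives at most $2d(a,b)$ separating edges, and strictly fewer whenever $a$ or $b$ is the equidistant vertex of one of those edges. For adjacent $a, b$ this always happens; in the triangle, for instance, only the edge $ab$ separates them.
\end{itemize}

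Summing these local inequalities gives $\Sz(G) \le 2W(G)$. For the equality case: if some block is a bridge or an odd cycle, choosing $x, y$ to be the endpoints of an edge in that block gives a strict inequality for that pair, hence globally. Conversely, if every block is an even cycle, equality holds pair by pair. The step I expect to need the most care is the odd-cycle count, namely verifying $\sigma \le 2d$ for all positions of $a$ and $b$. I would handle it by parametrizing the $2l+1$ edges by their antipodal vertex and counting explicitly.
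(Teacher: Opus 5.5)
The paper does not prove this statement. It quotes it from Hammer's work and uses only its equality case (Corollary~\ref{cor:SzToWI}), so there is no in-paper argument to compare yours with.

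Judged on its own, your proof is correct. The identity $\Sz(G)=\sum_{\{x,y\}}\sigma(x,y)$, the projection onto blocks, the reduction to a single block, and the even-cycle count are all sound.

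The odd-cycle step you flagged closes cleanly. Let $a,b$ be at distance $d$ on $C_{2l+1}$. The edges splitting them are exactly the $d$ edges of the short arc, together with the $d-1$ edges antipodal to the interior vertices of that arc. An edge whose antipodal vertex is $a$ or $b$ leaves that vertex equidistant, so it does not split them. Every other edge of the long arc has $a$ and $b$ on the same side. Hence $\sigma_B(a,b)=2d-1$ for every pair, not just adjacent ones, and a bridge is the case $d=1$ of the same formula.

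Two small precisions:
\begin{itemize}
\item The chain $B_1,\dots,B_r$ and the points $a_j,b_j$ are determined by the block--cut tree, whichever geodesic you pick. This matters because antipodal vertices of an even cycle have two geodesics.
\item Your convention $W(G)=\sum_{\{x,y\}}d_G(x,y)$ over unordered pairs is the one under which the theorem holds. It matches the $\frac12\sum_v d_G^v(v)$ form of the paper's definition.
\end{itemize}

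Your route gives an identity, not just an inequality. Set $\varepsilon_B=0$ for even cycles and $\varepsilon_B=1$ for bridges and odd cycles, and let $\beta(x,y)=\sum_j\varepsilon_{B_j}$ count the bridges and odd-cycle blocks on the chain from $x$ to $y$. Then $\sigma(x,y)=2d_G(x,y)-\beta(x,y)$, so $2W(G)-\Sz(G)=\sum_{\{x,y\}}\beta(x,y)$. This has three consequences:
\begin{itemize}
\item The equality case is immediate.
\item On trees $\beta=d_G$, so $\Sz=W$. The paper invokes this fact without proof when deducing Theorem~\ref{thm:TreeGraphAutomataWI}.
\item Since $\beta(x,y)\le d_G(x,y)$, you also recover the classical bound $\Sz(G)\ge W(G)$ on cacti.
\end{itemize}

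Your even-cycle mechanism is that the split induced by an edge is the cut obtained by deleting it together with its antipodal edge. This is exactly what the paper uses in Lemmas~\ref{lem:NonSpecialContribution} and~\ref{lem:ContributionSpecialC2n} to compute $n(u,v)$ in $\Gamma^G_n$. Your argument is therefore a natural self-contained justification of the cited input.
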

Since loops change nothing for the calculation of the Wiener and the Szeged index, the following is a consequence of Theorem~\ref{thm:SzToWI}.  

\begin{cor} \label{cor:SzToWI}
    Let $G$ be a tree, then for all $n \in \N$, 
    \begin{equation*}
        \Sz\br*{\Gamma^G_n} = 2 \, W\br*{\Gamma^G_n} . 
    \end{equation*}
\end{cor}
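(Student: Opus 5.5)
The statement is Corollary~\ref{cor:SzToWI}. The plan is to apply Theorem~\ref{thm:SzToWI} directly to the loopless graph obtained from $\Gamma^G_n$, after checking its hypotheses. Two things need care: the loops that $\Gamma^G_n$ carries, and the degenerate cases (a one-vertex tree $G$, or $n=0$).

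\textbf{Step 1: remove the loops.} Let $\overline{\Gamma^G_n}$ be $\Gamma^G_n$ with all loops deleted. A loop does not change any shortest-path distance, so $W(\Gamma^G_n)=W(\overline{\Gamma^G_n})$. For the Szeged index, a loop $\{u,u\}$ has $n(u,u)=0$, since no vertex is strictly closer to $u$ than to itself; hence loops contribute $0$. Also $n(u,v)$ for a genuine edge depends only on distances. So $\Sz(\Gamma^G_n)=\Sz(\overline{\Gamma^G_n})$.

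\textbf{Step 2: verify the hypotheses of Theorem~\ref{thm:SzToWI}.}

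\emph{Connectivity.} Take any word $w\in X^n$. Using only states $e=(x,y)$, which swap $x\leftrightarrow y$ in the first letter, we move the first letter along the tree $G$ to an arbitrary letter. To change position $j$, we first change positions $1,\dots,j-1$ to the appropriate endpoint $x$ of the edge. A state $(x,y)$ acting on $x^{j-1}z\cdots$ turns it into $y^{j-1}z\cdots$ if $z\neq x,y$. If $z\in\{x,y\}$ it also swaps $z$, and iterating this odometer-type action along the $e$-cycle reaches every combination of prefixes. By induction on $n$, combined with connectivity of $G$, every word can be brought to $x_1^n$. This is where I expect the only real work; alternatively, I would cite the connectivity of these Schreier graphs from \cite{articolo0}.

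\emph{Cactus with even-cycle blocks.} By the description of $e$-cycles preceding Figure~\ref{fig:eCycle}, every cycle of $\overline{\Gamma^G_n}$ is an $e$-cycle of length $2^i$. The paper already asserts that $\Gamma^G_n$ is a cactus whose blocks are cycles. A block of length $2^0=1$ is a loop, already removed. A block of length $2^1=2$ is a pair of parallel edges between $u$ and the vertex differing from it in position $1$; as a simple graph this is a single edge. I will handle these length-$2$ blocks in one of two ways: either treat them as digons, which are even cycles in the multigraph sense, or note that both $W$ and $\Sz$ count each such double edge once in the simple graph. In the second reading a bridge-edge block would break equality in Theorem~\ref{thm:SzToWI}. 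So I would adopt the convention that edges are counted with multiplicity in $\Sz$ (equivalently, that Theorem~\ref{thm:SzToWI} is applied to the multigraph, where a digon is an even cycle). I would check that Hammer's argument in \cite{stefano} goes through for digons: for a digon $\{u,v\}$, each of the two parallel edges contributes $n(u,v)n(v,u)$, which matches twice the bridge contribution. That is exactly the even-cycle count used in \cite{stefano}.

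\textbf{Step 3: conclude.} The blocks are even cycles, so Theorem~\ref{thm:SzToWI} gives equality: $\Sz(\overline{\Gamma^G_n})=2W(\overline{\Gamma^G_n})$. Step~1 then yields $\Sz(\Gamma^G_n)=2W(\Gamma^G_n)$. The trivial cases ($n=0$, or $k=1$) give a single vertex, where both sides are $0$.

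The main obstacle is the bookkeeping for length-$2$ blocks (multi-edges) in matching the hypothesis of Theorem~\ref{thm:SzToWI}, rather than any computation.
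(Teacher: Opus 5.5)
Your proposal is correct and follows the paper's route: the paper discards the loops, which affect neither $W$ nor $\Sz$, and then applies Theorem~\ref{thm:SzToWI} to the resulting cactus whose blocks are the even $e$-cycles. Your extra attention to the length-$2$ blocks is well placed, since the paper tacitly counts them as digons (for $i=1$ the two special edges of an $e$-cycle used in the proof of Theorem~\ref{thm:TreeGraphAutomataSz} are parallel edges); under the simple-graph reading the identity would already fail for $n=1$, where $\Gamma^G_1$ without loops is just $G$ and $\Sz(G)=W(G)$.
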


To derive the Szeged index, we need to evaluate the contribution of $n(u, v) n(v, u)$ for each edge $\cbr{u, v}$ in $\Gamma^G_n$. In order to simplify the calculations, we classify the edges using $e$-cycles. Let $C$ be an $e$-cycle of length $2^i$ and $w$ the common suffix of all vertices in $C$. It turns out, that all edges except the two special edges $e_C$ connecting $s^i w$ and $t^i w$ and $e_C'$ connecting $s^{i - 1} t w$ and $t^{i - 1} s w$ in $C$ have the same contribution, and this contribution is practically given by $e$ and the size of $C$. As shown in Figure~\ref{fig:eCycle}, the special edges $e_C$ and $e_C'$ are directly opposite to each other on the cycle. Since we already know the number of $e$-cycles by Proposition~\ref{prop:NumberCycles}, the next step is to determine the contribution of non-special edges. This requires some more structural insights. 

There may be one vertex whose deletion disconnects the graph and every obtained component has the same number of vertices, but for all other vertices $u$ the deletion of $u$ results in a graph that has a unique largest component. For each of these vertices $u$, we define its \textit{decoration} to be the graph obtained by taking $\Gamma^G_n$ and deleting all vertices contained in the unique largest component resulting from the deletion of $u$. This concept coincides with the definition of decorations given in~\cite{articolo0}. 

\begin{lem} \label{lem:NonSpecialContribution}
    Let $G$ be a tree on $k$ vertices and $\cbr{u, v}$ a non-special edge in an $e$-cycle of length $2^i$. Then $\cbr{u, v}$ satisfies
    \begin{equation*}
        n(u, v) n(v, u) = k^{i - 1} \br*{k^n - k^{i - 1}} .
    \end{equation*}
\end{lem}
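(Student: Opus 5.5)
The plan is to use the cactus structure of $\Gamma^G_n$ to turn $n(u,v)$ into a count of vertices hanging off one half of the cycle $C$. Let $C$ be the $e$-cycle of length $2^i$ containing $\cbr{u,v}$, with $e=(s,t)$ and common suffix $w$. Every block of $\Gamma^G_n$ is an even cycle, so removing the edges of $C$ splits $V(\Gamma^G_n)$ into the \emph{branches} $B_x$, one for each vertex $x\in C$ (the component containing $x$). For $y\in B_x$ we have $d(y,z)=d(y,x)+d(x,z)$ for all $z\in C$. Since $C$ has even length, the edge $\cbr{u,v}$ and its antipodal edge $\cbr{u',v'}$ cut $C$ into two arcs of $2^{i-1}$ vertices each. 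Then $n(u,v)$ is the total size of the branches over the arc containing $u$, and $n(v,u)$ is the same over the other arc; no vertex is equidistant. So it suffices to show that one arc carries exactly $k^{i-1}$ vertices, because the other arc then carries $k^n-k^{i-1}$.

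Next I would locate the branches using the self-similar structure. I claim the set $X^iw$ of words with suffix $w$ induces a connected copy of $\Gamma^G_i$. Changing position $i+1$ (which holds a letter $r\notin\cbr{s,t}$, or is absent if $i=n$) requires a prefix $x^i$ with $x$ adjacent to $r$ in $G$. Hence this copy is attached to the rest of $\Gamma^G_n$ only at $s^iw$ or at $t^iw$, i.e.\ only at an endpoint of the special edge $e_C$. All vertices outside $X^iw$ therefore lie in the branch of that endpoint. The same argument one level down shows that the branch in $X^iw$ of a vertex $p\,y\,w$ with $p\in\cbr{s,t}^{i-1}$ lies inside $X^{i-1}yw$, since position $i$ changes only at prefixes $x^{i-1}$. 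The special edge $e_C'$ joins the two halves $y=s$ and $y=t$ in the other way.

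Because $\cbr{u,v}$ is non-special, both endpoints of $e_C$ lie on the same arc, which I call the big arc; the other arc, the small arc, contains no vertex of the form $x^iw$. The remaining task is to prove that the branches over the small arc have total size exactly $k^{i-1}$. I would prove this by induction on $i$, using the explicit cyclic order of $C$ shown in Figure~\ref{fig:eCycle}: consecutive vertices alternate between changing the first letter and changing a later letter, like a binary reflected odometer. The inductive statement is that any window of $2^{i-1}$ consecutive cycle vertices avoiding $s^iw$ and $t^iw$ carries $k^{i-1}$ vertices. A sliding-window argument would show that shifting the window by one position preserves this total. This works because the branch sizes of antipodal vertices (which differ only in position $i$) sum to a fixed value determined by the sizes of the copies $X^{i-1}sw$ and $X^{i-1}tw$, each of size $k^{i-1}$. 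As a check, a window equal to the $y=s$ half of $C$ minus its attachment contributes $k^{i-1}$.

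I expect the main obstacle to be this last counting step: confirming that the total over the small arc does not depend on the position of the non-special edge. In particular, I need the antipodal-pairing identity $|B_x|+|B_{\bar x}|=\text{const}$ for the branches inside $X^iw$, where $\bar x$ is the vertex opposite $x$, and this needs care near the special edges. The decoration concept, with decorations computed via \cite{articolo0}, may give the branch sizes directly and would be my first thing to try.
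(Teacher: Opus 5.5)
Your decomposition via branches over the two arcs cut out by $\{u,v\}$ and its antipodal edge is sound. However, there is a genuine gap at exactly the step you flag: nothing in the proposal shows that the branches over the small arc have total size $k^{i-1}$, and the mechanism you suggest does not work.

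\begin{itemize}
\item The identity $|B_x|+|B_{\bar x}|=\mathrm{const}$ is false. Take $G$ the path $s-t-q$, $e=(s,t)$ and $n=i=3$. The branch sizes along $C$ in cyclic order $sss,ttt,stt,tst,sst,tts,sts,tss$ are $1,14,1,2,1,5,1,2$, so the antipodal sums are $2,19,2,4$.
\item Shifting a window of $2^{i-1}$ consecutive cycle vertices by one step removes some $c$ and adds its antipode $\bar c$. So sliding needs $|B_c|=|B_{\bar c}|$, not a constant sum. This equality holds when neither $c$ nor $\bar c$ is an endpoint of $e_C$ or $e_C'$, but it fails, e.g., for $ttt$, $tts$ above.
\item Every admissible window (one avoiding $s^iw$ and $t^iw$) contains both endpoints $s^{i-1}tw$ and $t^{i-1}sw$ of $e_C'$. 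Your check instead uses the $s$-half, which contains $s^iw$. So any base case forces you to compare $B(t^{i-1}sw)$ with $B(t^iw)\cap X^{i-1}tw$ (and symmetrically for $s$). That comparison is the real content of the lemma, and it is left open.
\end{itemize}

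The missing idea is the one the paper uses, and it makes induction and sliding unnecessary. Every vertex of the small component $K_u$ has the form $xsw$ or $xtw$ with $x\in X^{i-1}$. Exchanging $s$ and $t$ in position $i$ carries the branch (decoration) of each cycle vertex into the branch of its antipode. At the special pairs this says that the flip of $B(t^{i-1}sw)$ is $B(t^iw)\cap X^{i-1}tw$, and similarly with $s,t$ swapped. Hence for each $x\in X^{i-1}$ exactly one of $xsw$, $xtw$ lies in $K_u$, which gives $n(u,v)=k^{i-1}$ at once.

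Two of your structural claims also need correcting.
\begin{itemize}
\item $X^iw$ is not attached to the rest only at $s^iw$ or $t^iw$. Edges leave $X^iw$ at $r^iw$ and at $x^iw$ for every neighbour $x$ of the letter $r$ in position $i+1$. Already in Figure~\ref{fig:eCycle}, the vertex $33333$ is joined to $22222$ by the generator $(2,3)$. Likewise, the edge between $2222$ and $3333$ in $\Gamma^G_4$ has no counterpart inside $X^4 3$, since $(2,3)$ sends $22223$ to $33332$.
\item The branch of $pyw$ lies in $X^{i-1}yw$ only for $p\neq y^{i-1}$.
\end{itemize}
The consequence you actually need, that the branches over the small arc stay inside $X^{i-1}\{s,t\}w$, is true. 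But it requires showing that every edge leaving $X^{i-1}\{s,t\}w$ is incident to a vertex of $B(s^iw)\cup B(t^iw)$.
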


\begin{proof}
    Let $C$ be the $e$-cycle and $e = (s, t)$. Then $u$ and $v$ have vertices $u'$ and $v'$ directly opposite to them on $C$ that differ only in the $i$-th position. All vertices closer to $u$ than to $v$ are also closer to $v'$ than to $u'$, so removing $\cbr{u, v}$ and $\cbr{u', v'}$ splits the graph into two components, where one has $n(u, v)$ and the other $n(v, u)$ vertices. 
    
    Suppose without loss of generality that the component $K_u$ with $n(u, v)$ vertices does not have more vertices than the other component. Then, since $\cbr{u, v}$ is non-special, all vertices in $K_u$ are of the form $x s w$ or $x t w$, where $x$ is a word of length $i - 1$, and $w$ is the common suffix of the vertices in $C$. 
    
    Furthermore, if $x s w$ is in $K_u$, it is in the decoration of some vertex in $C$, so $x t w$ is in the decoration of the opposite vertex in $C$, and hence not in $K_u$. On the other hand, if $x s w$ is not in $K_u$, it has to be in the decoration of some vertex on the cycle, and thus $x t w$ is in $K_u$. 
    
    In conclusion, this shows that taking the prefixes of length $i - 1$ of $K_u$ gives a bijection to the words of length $i - 1$. Hence, 
    \begin{equation*}
        n(u, v) = k^{i - 1}, \quad \text{and} \quad n(v, u) = k^n - k^{i - 1}
    \end{equation*}
    follows by bipartivity. 
\end{proof}
The more complex case concerning the contribution of special edges is addressed by the following two results. 

\begin{lem} \label{lem:ContributionSpecialC2n}
    Let $G$ be a tree on $k$ vertices, $e = (s, t)$ an edge in $G$, and $\cbr{u, v}$ a special edge in the only $e$-cycle of length $k^n$. Then, 
    \begin{equation*}
        \begin{split}
            n(u, v) n(v, u) &= n(s, t) n(t, s) k^{2 (n - 1)} \\
                &= n(s, t) k^{n - 1} \br*{k^n - n(s, t) k^{n - 1}} \\
                &= n(t, s) k^{n - 1} \br*{k^n - n(t, s) k^{n - 1}} .
        \end{split}
    \end{equation*}
\end{lem}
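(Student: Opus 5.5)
The plan is to show directly that deleting the two special edges of the big $e$-cycle splits $\Gamma^G_n$ according to the \emph{last} letter of a word. I read the cycle length in the statement as $2^n$, since by Proposition~\ref{prop:NumberCycles} the only $e$-cycle of that size has length $2^n$.

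Let $e=(s,t)$ and let $C$ be the $e$-cycle through $s^n$. Its special edges are $e_C=\{s^n,t^n\}$ and $e_C'=\{s^{n-1}t,\,t^{n-1}s\}$. Removing $e$ from the tree $G$ splits $V$ into $S\ni s$ and $T\ni t$, with $|S|=n(s,t)$ and $|T|=n(t,s)$, because $G$ is a tree. Set $A=\{x a : x\in V^{n-1},\ a\in S\}$ and $B=\{x a : a\in T\}$. Then $|A|=n(s,t)k^{n-1}$ and $|B|=n(t,s)k^{n-1}$.

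\textbf{Step 1: which edges change the last letter.} From the definition of $\mathcal{A}_G$, a generator $f=(x,y)$ acts on a word by swapping $x\leftrightarrow y$ in successive positions. It continues past a position only if that letter was $x$, and it stops, with trivial restriction, after the first letter different from $x$. So the last letter can change only if the word is $x^{n-1}z$ with $z\in\{x,y\}$, and then the last letter moves along the edge $f$ of $G$. The last letter therefore jumps between $S$ and $T$ only if $f=e$ and the word is $s^{n-1}s$ or $s^{n-1}t$. These words give exactly the edges $e_C$ and $e_C'$. Hence every edge of $\Gamma^G_n$ other than $e_C,e_C'$ has both endpoints in $A$ or both in $B$.

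\textbf{Step 2: the cut.} Since $\Gamma^G_n$ is connected and is a cactus in which $e_C,e_C'$ lie on a common cycle, deleting both edges leaves exactly two components. By Step 1 these components are contained in $A$ and $B$, so they are exactly $A$ (containing $s^n$) and $B$ (containing $t^n$).

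\textbf{Step 3: distances.} Here I argue as in Lemma~\ref{lem:NonSpecialContribution}. Take $\{u,v\}=e_C$ with $u=s^n\in A$; the case of $e_C'$ is symmetric. Any vertex $z\in A$ hangs, through a decoration, on some vertex of the arc of $C$ lying in $A$. Its geodesics to $u$ and to $v$ both leave that arc either through $e_C$ or through $e_C'$. The two special edges are opposite on the even cycle $C$, so the two arcs have equal length $2^{n-1}-1$. Consequently $d(z,v)=d(z,u)+1$. Together with bipartivity, which rules out ties, this gives $n(u,v)=|A|$ and $n(v,u)=|B|$.

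Multiplying gives $n(s,t)n(t,s)k^{2(n-1)}$. The other two expressions in the statement follow from $n(s,t)+n(t,s)=k$, which gives $n(t,s)k^{n-1}=k^n-n(s,t)k^{n-1}$.

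The main obstacle is Step 3: the geodesic comparison needs the equal-arc property of the opposite special edges, and it needs care for decorations attached at the endpoints of the arcs. Step 1 is routine bookkeeping with the self-similar definition.
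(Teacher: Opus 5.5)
Your proposal is correct and follows essentially the same route as the paper: delete the two special edges of the cycle of length $2^n$, and identify the two resulting components as the words whose last letter lies on the $s$-side, respectively the $t$-side, of $e$ in $G$, which gives sizes $n(s,t)k^{n-1}$ and $n(t,s)k^{n-1}$. Your Steps 1 and 3 make explicit what the paper leaves implicit, namely why only $e_C$ and $e_C'$ carry the last letter across the cut, and why these component sizes equal $n(u,v)$ and $n(v,u)$ (the equal-arc argument for opposite special edges).
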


\begin{proof}
    Without loss of generality suppose that $u = s^n$ and $v = t^n$. Removing $(u, v)$ and $(u', v')$, where $u' = s^{n - 1} t$ and $v' = t^{n - 1} s$, splits the graph into two components. Let $K_u$ be the component that contains $u$. Then $K_u$ contains $n(u, v)$ vertices, and every vertex with $s$ in position $n$ is contained in $K_u$. Moreover, every vertex in $K_u$ can only have a symbol $s'$ in position $n$ if $s'$ is closer to $s$ than to $t$ in $G$. The same argument can be applied to the other component with the roles of $s$ and $t$ exchanged. Hence, 
    \begin{equation*}
        n(u, v) = \abs{K_u} = k^{n - 1} n(s, t)
            \quad \text{and} \quad
        n(v, u) = k^{n - 1} n(t, s) .
    \end{equation*}
    This proves the first equality. The second and third follow by
    \begin{equation*}
        k = n(s, t) + n(t, s) ,
    \end{equation*}
    which follows from the bipartivity of the tree $G$. 
\end{proof}

\begin{lem} \label{lem:ContributionSpecial}
    Let $G$ be a tree on $k$ vertices, $e = (s, t)$ an edge in $G$, and $i < n$. In $(n(t, s) - 1) k^{n - 1 - i}$ $e$-cycles of length $2^i$ each of the two special edges contributes
    \begin{equation*}
        n(s, t) k^{i - 1} \br*{k^n - n(s, t) k^{i - 1}}
    \end{equation*}
    and in the remaining $(n(s, t) - 1) k^{n - 1 - i}$ $e$-cycles of length $2^i$ each of the two special edges contributes
    \begin{equation*}
        n(t, s) k^{i - 1} \br*{k^n - n(t, s) k^{i - 1}}
    \end{equation*}
    to the Szeged index. 
\end{lem}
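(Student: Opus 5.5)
The plan is to mimic the proof of Lemma~\ref{lem:ContributionSpecialC2n}, working inside the suffix determined by the cycle. Let $C$ be an $e$-cycle of length $2^i$ with $i<n$. By the description of $e$-cycles, all vertices of $C$ share a suffix $w = r w'$ of length $n-i$. Here $r \notin \{s,t\}$ is the letter in position $i+1$ and $w'$ is an arbitrary word of length $n-i-1$. The special edges are $e_C=\{s^i w, t^i w\}$ and $e_C'=\{s^{i-1}tw, t^{i-1}sw\}$. Since $G$ is a tree and $r\neq s,t$, the vertex $r$ lies either on the $t$-side of $e$ (closer to $t$ than to $s$) or on the $s$-side. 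Counting the admissible suffixes, there are $n(t,s)-1$ choices of $r$ on the $t$-side (excluding $t$ itself) and $k^{n-1-i}$ choices of $w'$. This gives $(n(t,s)-1)k^{n-1-i}$ cycles in the first class and, symmetrically, $(n(s,t)-1)k^{n-1-i}$ in the second. As a sanity check, the two counts add up to $(k-2)k^{n-1-i}$, matching Proposition~\ref{prop:NumberCycles}.

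Next I fix a cycle in the first class, with $r$ on the $t$-side, and compute the contribution of $e_C$. As in Lemma~\ref{lem:NonSpecialContribution}, removing $e_C$ and $e_C'$ splits $\Gamma^G_n$ into two components, and $n(u,v)$, $n(v,u)$ are their sizes (bipartiteness ensures that no vertex is equidistant). Let $K$ be the component containing $s^iw$. I claim that $K$ is the smaller component and consists exactly of the vertices $x\,s'\,w$, where $x$ is any word of length $i-1$ and $s'$ is any letter closer to $s$ than to $t$ in $G$. This gives $|K| = n(s,t)k^{i-1}$, and then $n(v,u)=k^n - n(s,t)k^{i-1}$, which is the stated product.

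To justify the claim, I would argue as in the proof of Lemma~\ref{lem:ContributionSpecialC2n}, but restricted to the ``decoration'' structure. Every vertex of the form $y w$ with $|y|=i$ is reached from $C$ only by generators acting on the first $i$ letters, so the set $\{yw : |y|=i\}$ is exactly the union of $C$ and the decorations hanging off its vertices. Moreover, the rest of the graph attaches to $C$ only through vertices where letter $i+1$ can change. Changing the letter $r$ requires an edge of $G$ at $r$ and prefix $r^i$, which is reached only through the generators adjacent to $r$. Since $r$ is on the $t$-side, the rest of the graph hangs off the $t$-half of $C$. Within the block $\{yw\}$, the $i$-th letter plays the role that the $n$-th letter played in Lemma~\ref{lem:ContributionSpecialC2n}. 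Vertices with an $s$-side letter in position $i$ are separated from those with a $t$-side letter exactly by $e_C$ and $e_C'$, and the first $i-1$ letters are free. Hence $K=\{x s' w\}$, while the other component contains the $t$-side vertices of the block together with everything outside it. The second class is identical with $s$ and $t$ interchanged. The same argument applies to $e_C'$, since both special edges separate the same two components.

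I expect the main obstacle to be the structural claim that the portion of $\Gamma^G_n$ outside the block $\{yw\}$ attaches only to the $t$-side half of $C$, and that the $i$-th letter cleanly separates the block. First I would attempt a careful reading of which generators act nontrivially on $yw$: an edge $(a,b)$ of $G$ acts as $\eta$ on the first letter and recurses only while it reads $a$. From this I would identify the unique attachment vertex $t^{i}\cdots$ side, namely the vertex $q^i r w'$ with $q$ the neighbour of $r$ towards $t$. That vertex must lie in the $t$-side decoration.
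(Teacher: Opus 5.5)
Your strategy is the paper's. You sort the $e$-cycles of length $2^i$ by which side of $e$ the letter $r$ in position $i+1$ lies on, and your count is right. You then identify the component of $s^iw$ with the $s$-side component of $\Gamma^G_i$ from Lemma~\ref{lem:ContributionSpecialC2n}, via prefixes of length $i$.

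The problem is the step you flag as the main obstacle: as sketched, it would fail. In general there is no \emph{unique} attachment vertex $q^irw'$, and changing $r$ does not always need prefix $r^i$. A generator $(a,b)$ alters position $i+1$ of $yrw'$ exactly when $y=a^i$ and $r\in\{a,b\}$. So the block $\{yw : |y|=i\}$ meets the rest of $\Gamma^G_n$ at $r^{i+1}w'$ (for every edge oriented out of $r$) and at $a^irw'$ (for every edge $(a,r)$ oriented into $r$). For example, take the path on $1,2,3$ with $s=1$, $t=2$, $r=3$, and orient the edge as $(3,2)$. Then the attachment is $3^{i+1}w'$, while $2^i3w'$ is sent to $3\,2^{i-1}3w'$ inside the block. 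If $r$ has several neighbours, there are several attachments. Also, the block is not ``$C$ plus what hangs off it'', since the rest of the graph hangs off $C$ too.

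What rescues the argument is a weaker fact, which is all you need. Every attachment vertex has its $i$-th letter in $\{r\}\cup N_G(r)$. This set lies in the $t$-side subtree, because $r$ is on the $t$-side and $r\neq t$.

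Now let $S=\{xs'w : |x|=i-1,\ s' \text{ closer to } s \text{ than to } t\}$.
\begin{itemize}
\item No generator can change position $i+1$ or beyond at a vertex of $S$. So every move from $S$ acts on the first $i$ letters exactly as in $\Gamma^G_i$.
\item By Lemma~\ref{lem:ContributionSpecialC2n} at level $i$, only $e_C$ and $e_C'$ leave $S$.
\item $S$ stays connected, since the edges of $\Gamma^G_i$ lost inside the block are the $\{r^i,a^i\}$, and both endpoints of each are on the $t$-side.
\end{itemize}
Hence $K=S$ and $|K|=n(s,t)k^{i-1}$. This is precisely the paper's argument: the letters after position $i$ cannot change in $K_u$, so prefixes of length $i$ give an isomorphism onto the component of $s^i$ in $\Gamma^G_i$.
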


\begin{proof}
    Take the same decomposition as in the proof of the last lemma, again with $K_u$ as the component that contains $u$. Without loss of generality suppose that $u = s^i w$, where $w$ is the common prefix of all vertices on the $e$-cycle. If the letter on the first position of $w$ is closer to $t$ than to $s$ in $G$, it cannot be changed in $K_u$ and the same is true for all letters after position $i$. Thus, taking prefixes of length $i$ gives a graph isomorphism to the component of $s^i$ in $\Gamma^G_i$. Hence, by Lemma~\ref{lem:ContributionSpecialC2n}, 
    \begin{equation*}
        \begin{split}
            n(u, v) = \abs{K_u} = n(s, t) k^{i - 1} ,
            \quad \text{and by bipartivity, } \quad 
            n(v, u) = k^n - n(s, t) k^{i - 1} .
        \end{split}
    \end{equation*}
    There are $(n(t, s) - 1) k^{n - 1 - i}$ distinct words of type $s^i w$, where the letter on the first position of $w$ is closer to $t$ than to $s$ in $G$ and all those take the role of $u$ once. So the first assertion is true, and exchanging the roles of $s$ and $t$ the second follows. 
\end{proof}

Now we have a complete classification of the edges including their respective contributions to the Szeged index. 

\begin{thm} \label{thm:TreeGraphAutomataSz}
    Let $G$ be a tree on $k$ vertices and $n \in \N$, then
    \begin{equation*}
        \begin{split}
            \Sz\br*{\Gamma^G_n} &= 
                \frac{2 (k - 1)^2 (2 k^2 - 2 k - 1)}{k^3 (2 k - 1)} \cdot 2^n k^{2 n} 
                    - \frac{4 (k - 1)^2}{k^2} \cdot n k^{2 n} \\
                &\quad+ \frac{4 (k + 1) (k - 1)}{k^3} \cdot k^{2 n} 
                    - \frac{4 (k + 1) (k - 1)}{k (2 k - 1)} \cdot k^n \\
                &\quad+ \br*{\frac{4}{k^2} \cdot n k^{2 n} 
                        - \frac{2 (k^2 + 2)}{k^3 (k - 1)} \cdot k^{2 n}
                        + \frac{2 (k + 2)}{k^2 (k - 1)} \cdot k^n}
                    \Sz(G) .
        \end{split}
    \end{equation*}
\end{thm}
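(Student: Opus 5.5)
The plan is to sum the edge contributions from Lemmas~\ref{lem:NonSpecialContribution}, \ref{lem:ContributionSpecialC2n} and \ref{lem:ContributionSpecial} over all $e$-cycles. The counts of $e$-cycles come from Proposition~\ref{prop:NumberCycles}. Fix an edge $e=(s,t)$ of $G$ and write $a=n(s,t)$, $b=n(t,s)$, so that $a+b=k$. Since $G$ is a tree, $\Sz(G)=\sum_{e}ab$. The whole computation therefore reduces to showing that the total contribution of the $e$-cycles has the form
\[
\alpha(n,k)+\beta(n,k)\, ab .
\]
Summing over the $k-1$ edges then gives $(k-1)\alpha+\beta\,\Sz(G)$, and I would match this against the stated coefficients.

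Step 1 handles the non-special edges. An $e$-cycle of length $2^i$ has $2^i-2$ non-special edges, each contributing $k^{i-1}(k^n-k^{i-1})$. The long cycle ($i=n$) occurs once, and for $i<n$ there are $(k-2)k^{n-i-1}$ such cycles. This part is independent of $a,b$, so it only enters $\alpha$.

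Step 2 handles the special edges. By Lemma~\ref{lem:ContributionSpecialC2n}, the long cycle contributes $2ab\,k^{2(n-1)}$. For $i<n$, Lemma~\ref{lem:ContributionSpecial} gives a total of
\[
2k^{n-1-i}\Big[(b-1)\,a k^{i-1}(k^n-a k^{i-1})+(a-1)\,b k^{i-1}(k^n-b k^{i-1})\Big].
\]
The key algebraic point is that this is polynomial in $a,b$ only through $ab$ and $a+b=k$. Expanding the linear terms gives $(b-1)a+(a-1)b=2ab-k$. For the quadratic terms, $(b-1)a^2+(a-1)b^2=ab(a+b)-(a^2+b^2)=kab-(k^2-2ab)=(k+2)ab-k^2$. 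So each $i$ contributes an expression linear in $ab$.

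Step 3 is to sum over $i$. The terms with a small index, where Proposition~\ref{prop:NumberCycles} and the lemmas degenerate, need care. For $i=0$ the cycles are loops and contribute nothing. For $i=1$ the "cycle" is a double edge or a single edge, and I would check directly that the special-edge formula still counts it correctly, or else treat it separately. The remaining work is evaluating geometric sums of the form $\sum_i 2^i k^{n+i-2}$, $\sum_i k^{2n-2}$ (which yields the $nk^{2n}$ terms), $\sum_i k^{n+i-2}$ and $\sum_i k^{2i}$. These produce the $2^nk^{2n}$, $nk^{2n}$, $k^{2n}$ and $k^n$ terms, and the factor $2k-1$ in the denominators comes from $\sum (2k)^i$.

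I expect the main obstacle to be the bookkeeping in this final summation. That includes the boundary indices $i\in\{0,1\}$, where "special" and "non-special" edges may coincide or a cycle has only two edges, and the exact collection of coefficients into the stated closed form. As a sanity check I would verify the result against a direct computation for small cases, for instance $G=P_2$ with $n=1,2$, or the tangled odometer $P_3$ with $n=2$.
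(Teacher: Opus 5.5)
Your plan is correct and is essentially the paper's proof. The paper splits the sum into non-special and special edges on the long cycle ($A$, $B$) and on the cycles of length $2^i$, $1\le i<n$ ($C$, $D$). It reduces $D_i$ to a linear expression in $\Sz(G)$ using exactly your identities $(b-1)a+(a-1)b=2ab-k$ and $(b-1)a^2+(a-1)b^2=(k+2)ab-k^2$.

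Your boundary worry is settled as in the paper by summing only over $1\le i\le n-1$. Loops ($i=0$) contribute nothing. A length-$2$ $e$-cycle is a double edge whose two parallel edges are precisely $e_C$ and $e_C'$, with $2^1-2=0$ non-special edges, so the lemmas apply verbatim. This multigraph convention is also what makes the formula give $2$ for $G=P_2$, $n=1$.
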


\begin{proof}
    To make the calculations easier to follow, we split it up into four parts and use $E = E(G)$ for the edge set of the tree $G$. By Proposition~\ref{prop:NumberCycles} and Lemma~\ref{lem:NonSpecialContribution}, the contribution of non-special edges in cycles of length $2^n$ is
    \begin{equation*}
        A = \sum_{e \in E} \br*{2^n - 2} \cdot k^{n - 1} \br*{k^n - k^{n - 1}} 
            = (k - 1)^2 k^{2 n - 2} \br*{2^n - 2} .
    \end{equation*}
    The contribution of special edges in cycles of length $2^n$ follows from Proposition~\ref{prop:NumberCycles} and Lemma~\ref{lem:ContributionSpecialC2n}. 
    \begin{equation*}
        B = \sum_{(s, t) \in E} 2 \cdot n(s, t) n(t, s) k^{2 (n - 1)}
            = 2 k^{2 n - 2} \Sz(G) .
    \end{equation*}
    Once more Proposition~\ref{prop:NumberCycles} and Lemma~\ref{lem:NonSpecialContribution} are needed for the contribution of non-special edges in cycles of length $2^i$ for $i < n$. 
    \begin{equation*}
        \begin{split}
            C_i &= \sum_{e \in E} (k - 2) k^{n - 1 -i} \cdot \br*{2^i - 2} 
                    \cdot k^{i - 1} \br*{k^n - k^{i - 1}} \\
                &= (k - 2) (k - 1) k^{n - 2} \br*{2^i - 2} \br*{k^n - k^{i - 1}} .
        \end{split}
    \end{equation*}
    These contributions $C_i$ have to be considered for all $i$ with $1 \leq i < n$.
    \begin{equation*}
        \begin{split}
            C &= \sum_{i = 1}^{n - 1} C_i
                = (k - 2) (k - 1) k^{n - 2} 
                    \sum_{i = 1}^{n - 1} \br*{k^n 2^i - 2 \cdot (2 k)^{i - 1} - 2 k^n + 2 k^{i - 1}} \\
                &= (k - 2) (k - 1) k^{n - 2} 
                    \left(\rule{0cm}{0.8cm}\right. 
                        k^n \br*{2^n - 2} 
                        - \frac{2 \br*{(2 k)^{n - 1} - 1}}{2 k - 1} \\
                    &\phantom{= (k - 2) (k - 1) k^{n - 2}}
                        - 2 (n - 1) k^n
                        + \frac{2 \br*{k^{n - 1} - 1}}{k - 1} 
                    \left.\rule{0cm}{0.8cm}\right) .
        \end{split}
    \end{equation*}
    The final contribution comes from special edges in cycles of length $2^i$ for $i < n$, and is given by Lemma~\ref{lem:ContributionSpecial}. 
    \begin{equation*}
        \begin{split}
            D_i &= \sum_{(s, t) \in E} 
                    \Bigg( (n(t, s) - 1) k^{n - 1 - i} \cdot 2 
                        \cdot n(s, t) k^{i - 1} \br*{k^n - n(s, t) k^{i - 1}} \\
                    &\phantom{= \sum_{(s, t) \in E}} 
                        + (n(s, t) - 1) k^{n - 1 - i} \cdot 2 
                        \cdot n(t, s) k^{i - 1} \br*{k^n - n(t, s) k^{i - 1}} \Bigg) \\
                &= 2 k^{n - 2} \sum_{(s, t) \in E} 
                    \Bigg( 2 k^n n(s, t) n(t, s) 
                        - n(s, t) n(t, s) \big( n(s, t) + n(t, s) \big) k^{i - 1} \\
                    &\phantom{= 2 k^{n - 2} \sum_{(s, t) \in E}} 
                        - \big( n(s, t) + n(t, s) \big) k^n 
                        + \big( n(s, t)^2 + n(t, s)^2 \big) k^{i - 1} \Bigg) .
        \end{split}
    \end{equation*}
    Since $n(s, t) + n(t, s) = k$ and
    \begin{equation*}
        n(s, t)^2 + n(t, s)^2 = \big( n(s, t) + n(t, s) \big)^2 - 2 n(s, t) n(t, s) = k^2 - 2 n(s, t) n(t, s) ,
    \end{equation*}
    $D_i$ can be simplified in the following way. 
    \begin{equation*}
        \begin{split}
            D_i &= 2 k^{n - 2} \sum_{(s, t) \in E} 
                    \Bigg(2 k^n n(s, t) n(t, s) 
                        - n(s, t) n(t, s) k^i \\
                    &\phantom{= 2 k^{n - 2} \sum_{(s, t) \in E}} - k^{n + 1}
                        + k^{i + 1} - 2 n(s, t) n(t, s) k^{i - 1} \Bigg) \\
                &= 2 k^{n - 2} \br*{2 \Sz(G) k^n - (k + 2) \Sz(G) k^{i - 1} 
                    - (k - 1) k^{n + 1} + (k - 1) k^{i + 1} }.
        \end{split}
    \end{equation*}
    As before, $D_i$ has to be considered for all $i$ with $1 \leq i < n$. 
    \begin{equation*}
        \begin{split}
            D &= \sum_{i = 1}^{n - 1} D_i \\
                &= 2 k^{n - 2}
                    \sum_{i = 1}^{n - 1} \br*{2 \Sz(G) k^n - (k + 2) \Sz(G) k^{i - 1} 
                        - (k - 1) k^{n + 1} + (k - 1) k^{i + 1} } \\
                &= 2 k^{n - 2} 
                    \left(\rule{0cm}{1cm}\right. 
                        \br*{\rule{0cm}{0.8cm}
                        2 (n - 1) k^n 
                        - \frac{(k + 2) \br*{k^{n - 1} - 1}}{k - 1} } \Sz(G) \\
                    &\phantom{2 k^{n - 2}} 
                        - (n - 1) (k - 1) k^{n + 1}
                        + k^2 \br*{k^{n - 1} - 1} 
                    \left.\rule{0cm}{1cm}\right) .
        \end{split}
    \end{equation*}
    Now, plugging all these expressions into  
    \begin{equation*}
        \Sz\br*{\Gamma^G_n} = A + B + C + D ,
    \end{equation*}
    the formula for $\Sz\br*{\Gamma^G_n}$ follows with some more standard reformulations. 
\end{proof}

\begin{proof}[Proof of Theorem~\ref{thm:TreeGraphAutomataWI}]
    The Wiener index and the Szeged index coincide on trees. Thus, the claim follows directly by Theorem~\ref{thm:TreeGraphAutomataSz} and Corollary\ref{cor:SzToWI}. 
\end{proof}

On trees the Wiener index is maximised by paths and minimised by stars. Due to Theorem~\ref{thm:TreeGraphAutomataSz}, this is also true for the Wiener index of Schreier graphs of graph automata that arise from trees. Below, we present these maxima and minima. 

\begin{cor}
    Let $G$ be a path on $k$ vertices, then
    \begin{equation*}
        \begin{split}
            W\br*{\Gamma^G_n} &= 
                \frac{(k - 1)^2 (2 k^2 - 2 k - 1)}{k^3 (2 k - 1)} \cdot 2^n k^{2 n} 
                    + \frac{(k - 1) (k - 2) (k - 3)}{3 k^2} \cdot n k^{2 n} \\
                &\quad- \frac{(k + 1) (k - 2) (k^2 + 2 k - 6)}{6 k^3} \cdot k^{2 n} 
                    + \frac{(k + 1) (k - 2) (2 k - 5)}{6 k (2 k - 1)} \cdot k^n .
        \end{split}
    \end{equation*}
\end{cor}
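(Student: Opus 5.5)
The plan is to specialise Theorem~\ref{thm:TreeGraphAutomataWI} to the path $P_k$. Since the formula there depends on $G$ only through $k$ and $W(G)$, the only new input is the Wiener index of the path,
\begin{equation*}
    W(P_k) = \sum_{1 \leq a < b \leq k} (b - a) = \binom{k+1}{3} = \frac{(k-1)k(k+1)}{6} .
\end{equation*}
One can also obtain this from $\Sz(P_k)=\sum_{j=1}^{k-1} j(k-j)$ together with $W=\Sz$ on trees, which is the form in which it enters Theorem~\ref{thm:TreeGraphAutomataSz}.

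Next, substitute this value into the bracketed coefficient of $W(G)$ in Theorem~\ref{thm:TreeGraphAutomataWI}, term by term. The coefficient of $n k^{2n}$ becomes
\begin{equation*}
    \frac{2}{k^2}\cdot\frac{(k-1)k(k+1)}{6} = \frac{(k-1)(k+1)}{3k} .
\end{equation*}
The coefficient of $k^{2n}$ becomes
\begin{equation*}
    -\frac{(k^2+2)(k+1)}{6k^2} ,
\end{equation*}
since the factor $k-1$ cancels. The coefficient of $k^n$ becomes
\begin{equation*}
    \frac{(k+2)(k+1)}{6k} .
\end{equation*}

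Then add these to the $W(G)$-free terms of Theorem~\ref{thm:TreeGraphAutomataWI}, grouping by the four monomials $2^n k^{2n}$, $n k^{2n}$, $k^{2n}$ and $k^n$.
\begin{itemize}
    \item The $2^n k^{2n}$ term is unchanged.
    \item For $n k^{2n}$, put everything over $3k^2$:
    \begin{equation*}
        -6(k-1)^2 + k(k-1)(k+1) = (k-1)(k^2-5k+6) = (k-1)(k-2)(k-3) .
    \end{equation*}
    This matches the claimed coefficient.
    \item For $k^{2n}$, put everything over $6k^3$:
    \begin{equation*}
        12(k+1)(k-1) - k(k+1)(k^2+2) = -(k+1)(k^3 - 12k + 12) .
    \end{equation*}
    The claim asks for $-(k+1)(k-2)(k^2+2k-6)$. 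Expanding, $(k-2)(k^2+2k-6) = k^3 - 10k + 12$, which does not equal $k^3 - 12k + 12$.
    \item For $k^n$, put everything over $6k(2k-1)$:
    \begin{equation*}
        -12(k+1)(k-1) + (k+1)(k+2)(2k-1) = (k+1)(2k^2 - 9k + 10) = (k+1)(k-2)(2k-5) .
    \end{equation*}
    This matches the claimed coefficient.
\end{itemize}

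The main obstacle is exactly this $k^{2n}$ coefficient, where the algebra as I have set it up does not reproduce the stated factorisation. I would first recheck the value of $W(P_k)$ and the constant $\frac{(k^2+2)}{k^3(k-1)}$ in Theorem~\ref{thm:TreeGraphAutomataWI}. A good test is the small case $k=2$: there $\Gamma^{P_2}_n$ is a single $2^n$-cycle, so its Wiener index is known exactly and can be compared with both the theorem and the corollary. Failing that, I would test $k=3$, $n=1$, where $\Gamma$ is $P_3$ with loops, so $W=4$. If the discrepancy persists, the likely culprit is a constant in Theorem~\ref{thm:TreeGraphAutomataSz} (the $A+B+C+D$ summation), or a typo in the corollary's $k^{2n}$ coefficient. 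I would recompute that one coefficient directly from $A$, $B$, $C$ and $D$.
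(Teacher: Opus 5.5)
\textbf{Verdict: genuine gap (arithmetic slip), not a wrong approach.} Your approach is the paper's: its proof just substitutes $W(P_k)=k(k^2-1)/6$ into Theorem~\ref{thm:TreeGraphAutomataWI}. Your value of $W(P_k)$ is correct. So are the three substituted coefficients $\frac{(k-1)(k+1)}{3k}$, $-\frac{(k+1)(k^2+2)}{6k^2}$ and $\frac{(k+1)(k+2)}{6k}$, and your matching of the $nk^{2n}$ and $k^n$ coefficients.

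The gap is that the proposal never reaches the statement. It stops at a claimed mismatch in the $k^{2n}$ coefficient and suggests an error in Theorem~\ref{thm:TreeGraphAutomataSz} or a typo in the corollary. That mismatch is a slip in your own expansion: you dropped the $-2k$ coming from $k(k^2+2)$. Over the common denominator $6k^3$ the numerator is
\begin{equation*}
12(k+1)(k-1) - k(k+1)(k^2+2) = (k+1)\bigl(12k - 12 - k^3 - 2k\bigr) = -(k+1)\bigl(k^3 - 10k + 12\bigr).
\end{equation*}
You already computed $(k-2)(k^2+2k-6) = k^3 - 10k + 12$. So this is exactly the stated coefficient $-\frac{(k+1)(k-2)(k^2+2k-6)}{6k^3}$, and the corollary follows.

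The sanity checks you proposed confirm this, so nothing needs repairing upstream; fix that one line and the proof is complete.
\begin{itemize}
\item For $k=2$, every term except the first carries a factor $k-2$ and vanishes. The first term is $2^{3n}/8$, which is the Wiener index of the single $2^n$-cycle $\Gamma^{P_2}_n$.
\item For $k=3$, $n=1$, the formula gives $\frac{88}{15} - 2 + \frac{2}{15} = 4 = W(P_3)$.
\item For $k=3$, $n=2$, it gives $\frac{528}{5} - 18 + \frac{2}{5} = 88$. This agrees with a direct count in $\Gamma^{P_3}_2$: two $4$-cycles glued at the vertex $22$, with a pendant edge at the opposite vertex of each cycle ($21$--$31$ and $23$--$13$).
\end{itemize}
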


\begin{proof}
    The Wiener index of $G$ is
    \begin{equation*}
        W(G) = \frac{k (k^2 - 1)}{6} .
    \end{equation*}
\end{proof}

\begin{cor}
    Let $G$ be a star on $k$ vertices, then
    \begin{equation*}
        W\br*{\Gamma^G_n} = 
            \frac{(k - 1)^2 (2 k^2 - 2 k - 1)}{k^3 (2 k - 1)} \cdot 2^n k^{2 n} 
            - \frac{(k - 1) (k - 2)}{k^2} \cdot k^{2 n}
            + \frac{(k - 1) (k - 2)}{k^2 (2 k - 1)} \cdot k^n .
    \end{equation*}
\end{cor}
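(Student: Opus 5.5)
The plan is to specialize Theorem~\ref{thm:TreeGraphAutomataWI}, which already gives $W\br*{\Gamma^G_n}$ as an explicit affine function of $W(G)$ with coefficients depending only on $k$ and $n$. So the only structural input needed is the Wiener index of the star $G = K_{1,k-1}$. I will compute it directly from the definition. The $k-1$ center--leaf pairs each have distance $1$, and the $\binom{k-1}{2}$ leaf--leaf pairs each have distance $2$. Hence
\begin{equation*}
    W(G) = (k-1) + (k-1)(k-2) = (k-1)^2 .
\end{equation*}
Equivalently, on a tree one may use $W(G)=\Sz(G)=\sum_{(s,t)} n(s,t)\,n(t,s)$: every edge of the star splits it into parts of sizes $1$ and $k-1$, giving again $(k-1)\cdot 1\cdot (k-1)=(k-1)^2$.

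Next I substitute $W(G)=(k-1)^2$ into Theorem~\ref{thm:TreeGraphAutomataWI} and collect the coefficients of the four monomials $2^n k^{2n}$, $n k^{2n}$, $k^{2n}$ and $k^n$.

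The $2^n k^{2n}$ term does not involve $W(G)$, so its coefficient is unchanged.

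For $n k^{2n}$ the coefficient becomes
\begin{equation*}
    -\frac{2(k-1)^2}{k^2}+\frac{2}{k^2}(k-1)^2 = 0 ,
\end{equation*}
which explains why no $n k^{2n}$ term appears in the statement.

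For $k^{2n}$ the coefficient is
\begin{equation*}
    \frac{2(k+1)(k-1)}{k^3}-\frac{(k^2+2)(k-1)}{k^3}
    = \frac{(k-1)(2k-k^2)}{k^3}
    = -\frac{(k-1)(k-2)}{k^2} .
\end{equation*}

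For $k^n$ I put everything over the common denominator $k^2(2k-1)$:
\begin{equation*}
    -\frac{2(k+1)(k-1)}{k(2k-1)}+\frac{(k+2)(k-1)}{k^2}
    = \frac{(k-1)\bigl(-2k(k+1)+(k+2)(2k-1)\bigr)}{k^2(2k-1)}
    = \frac{(k-1)(k-2)}{k^2(2k-1)} .
\end{equation*}
These three coefficients, together with the unchanged leading term, are exactly the asserted formula.

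There is no genuine obstacle here: the whole content is carried by Theorem~\ref{thm:TreeGraphAutomataWI}, which itself rests on Theorem~\ref{thm:TreeGraphAutomataSz} and Corollary~\ref{cor:SzToWI}. The only step requiring care is the rational-function simplification of the $k^n$ coefficient, where a sign or a factor of $k$ is easy to drop. I would double-check it numerically, for instance at $k=3$: the left side is $-\tfrac{16}{15}+\tfrac{10}{9}=\tfrac{2}{45}$, and the right side is $\tfrac{2\cdot 1}{9\cdot 5}=\tfrac{2}{45}$, so the two agree.
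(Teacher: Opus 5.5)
Your proposal is correct and is essentially the paper's proof: the paper just records $W(G)=(k-1)^2$ for the star and substitutes it into Theorem~\ref{thm:TreeGraphAutomataWI}. You do the same, and you also carry out the coefficient simplifications explicitly and correctly: the $n k^{2n}$ term vanishes, the $k^{2n}$ coefficient is $-\frac{(k-1)(k-2)}{k^2}$, and the $k^n$ coefficient is $\frac{(k-1)(k-2)}{k^2(2k-1)}$.
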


\begin{proof}
    The Wiener index of $G$ is
    \begin{equation*}
        W(G) = (k - 1)^2 .
    \end{equation*}
\end{proof}

\begin{rem}
Proposition \ref{diametro} says that, independently from the shape of $G$, the dominating term of the diameter of $\Gamma_n^G$ is $2^{n+1}$. Moreover, the number of vertex pairs in $\Gamma_n^G$ is $k^{2n}/2$. Theorem \ref{thm:TreeGraphAutomataWI} guarantees that the dominating term of the Wiener index of $\Gamma_n^G$ equals 
\begin{equation*}
    W(G)\frac{(k - 1)^2 (2 k^2 - 2 k - 1)}{k^3 (2 k - 1)}2^nk^{2n} .    
\end{equation*}
This implies that the asymptotic ratio between the Wiener index and the diameter times the number of pairs is the constant
$$
\frac{W(G)}{2}\frac{(k - 1)^2 (2 k^2 - 2 k - 1)}{k^3 (2 k - 1)} .
$$
We leave open the problem of whether this always occurs in the context of Schreier graphs of automaton groups.
\end{rem}

\section{Acknowledgements}

Stefan Hammer acknowledges the support of the Austrian Science Fund: FWF-P31889-N35. D. D'Angeli and E. Rodaro are members of the National Research Group GNSAGA (Gruppo Nazionale per le Strutture Algebriche, Geometriche e le loro Applicazioni) of Indam.

\end{document}